\newcounter{sarrow}
\renewcommand{\leq}{\leqslant}
\renewcommand{\geq}{\geqslant}
\renewcommand{\nleq}{\nleqslant}
\def\subsection{\@startsection{subsection}{3}%
  \z@{.5\linespacing\@plus.7\linespacing}{.3\linespacing}%
  {\bfseries\centering}}
\def\subsubsection{\@startsection{subsubsection}{3}%
  \z@{.5\linespacing\@plus.7\linespacing}{.3\linespacing}%
  {\centering}}
\def\myfnt{\ifx\protect\@typeset@protect\expandafter\footnote\else\expandafter\@gobble\fi}
\theoremstyle{definition}
\newtheorem{theorem}{Theorem}[section]
\newtheorem{definition}[theorem]{Definition}
\newtheorem{lemma}[theorem]{Lemma}
\newtheorem{proposition}[theorem]{Proposition}
\newtheorem{corollary}[theorem]{Corollary}
\newtheorem{remark}[theorem]{Remark}
\newtheorem{notation}[theorem]{Notation}
\newcounter{claimcounter}
\numberwithin{claimcounter}{theorem}
\newcommand{\pureindep}[1][]{%
  \mathrel{
    \mathop{
      \vcenter{
        \hbox{\oalign{\noalign{\kern-.3ex}\hfil$\vert$\hfil\cr
              \noalign{\kern-.7ex}
              $\smile$\cr\noalign{\kern-.3ex}}}
      }
    }\displaylimits_{#1}
  }
}
\newcommand{\indep}[2]{%
  \mathrel{
    \mathop{
      \vcenter{
        \hbox{%
\oalign{
\noalign{\kern-.3ex}\hfil$\vert$\hfil\cr
              \noalign{\kern-.7ex}
              $\smile$\cr\noalign{\kern-.3ex}
}
}
      }
}^{\!\!\!\!\!#2}_{\!\!\hspace{-0.1em}#1}
  }
}
\newcommand{\displayindep}[2]{%
  \mathrel{
    \mathop{
      \vcenter{
        \hbox{%
\oalign{
\noalign{\kern-.3ex}\hfil$\vert$\hfil\cr
              \noalign{\kern-.7ex}
              $\smile$\cr\noalign{\kern-.3ex}
}
}
      }
}^{\!\!\hspace{-0.1em}#2}_{\!\!\hspace{-0.1em}#1}
  }
}
\newcommand{\displayfindep}[2]{%
  \mathrel{
    \mathop{
      \vcenter{
        \hbox{%
\oalign{
\noalign{\kern-.3ex}\hfil$\vert$\hfil\cr
              \noalign{\kern-.7ex}
              $\smile$\cr\noalign{\kern-.3ex} 
}
}
      }
}^{\!\hspace{-0.14em}#2}_{\!\!\hspace{-0.05em}#1}
  }
}
\newcommand{\divindep}[1][]{\indep{#1}{\mathrm{d}}}
\newcommand{\displaydivindep}[1][]{\displayindep{#1}{\mathrm{d}}\!\hspace{-0.15em}}
\newcommand*\dep{{=\mkern-1.2mu}}
\newcommand{\perpc}[1]{\perp_{#1}}
\def\presuper#1#2%
\begin{document}

\title[Reduction of Dat. Indep. to Dividing in Atomless Boolean Algebras]{Reduction of Database Independence to Dividing in Atomless Boolean Algebras}
\thanks{The research of the second author was supported by the Finnish Academy of Science and Letters (Vilho, Yrj\"o and Kalle V\"ais\"al\"a foundation).  The authors would like to thank the referee for suggesting a simpler and slightly more general proof of the main theorem. The authors would also like to thank John Baldwin and Jouko V\"a\"an\"anen for useful suggestions and discussions related to this paper.}

\author{Tapani Hyttinen}
\address{Department of Mathematics and Statistics,  University of Helsinki, Finland}

\author{Gianluca Paolini}
\address{Department of Mathematics and Statistics,  University of Helsinki, Finland}

\date{}
\maketitle

\begin{abstract} 
	
	We prove that the form of conditional independence at play in database theory and independence logic is reducible to the first-order dividing calculus in the theory of atomless Boolean algebras.
	This establishes connections between database independence and stochastic independence. Indeed, in light of the aforementioned reduction and recent work of Ben-Yaacov \cite{on_ran_var}, the former case of independence can be seen as the discrete version of the latter.
		
\end{abstract}


\section{Introduction}

At the current state of the art, the model-theoretic study of independence is a well-developed subject with numerous examples arising from different areas of mathematics. So-called independence calculi have been elaborated for several classes of structures, e.g. classes elementary in classical first-order logic, classes elementary in continuous logic, finitary abstract elementary classes, etc. Furthermore, these calculi are canonical, in the sense that the satisfaction of few crucial axioms imposes their uniqueness (for an overview see \cite{asa}). In Table \ref{indep} we see some examples. In all these examples the natural internal notion of independence coincides with the model-theoretic one.

	\begin{table}[h]
\begin{tabular}{c|c|c|c}
	\text{Class} & \text{Example} & \text{Independence} & \text{References} \\
	\hline
	\hline
	\rule{0pt}{3ex}  \text{Elementary in} & \text{Vector spaces} &  \text{Linear} & \text{\cite{shelah}} \\
	\text{classical $\mathrm{FOL}$} &  & \text{independence}  \\
		\hline
	\rule{0pt}{3ex}  \text{Elementary in} & \text{Hilbert spaces} &  \text{Orthogonality} & \text{\cite{CFOL_and_stab}} \\
	\text{continous logic} &  &  \text{}  \\
		\hline
	\rule{0pt}{3ex}  \text{Homogeneous} & \text{$\ell_p$-spaces} &  \text{Variations of} & \text{\cite{asa_and_tapani}} \\
	\text{metric} &  & \text{orthogonality} \\
	\text{classes} & & \\
		\hline
	\rule{0pt}{3ex}  \text{Quasi-minimal} & \text{Covers of} & \text{Independence} & \text{\cite{tapani_and_kaisa}}\\
	\text{pregeometry} & \text{smooth algebraic} & \text{from Zariski}\\
	\text{structures} &  \text{curves}    &  \text{topology}      \\
		\hline
	\rule{0pt}{3ex}  \text{Other abstract} & \text{$\mathbb{Z}_p$-modules} & \text{For closed pure} & \text{\cite{asa_and_tapani_padics}\tablefootnote{This is an $\mathrm{AEC}$ version of the metric class studied in \cite{asa_and_tapani_padics}. For more see \cite{baldwin}.}} \\
	\text{elementary} & \text{without elem.} &  submodules the usual \\
	\text{classes} & \text{of infinite height} & \text{amalgam of modules\tablefootnote{I.e. the sets are independent if and only if relative to each other the positions of the sets are as in the amalgam.}} & \\
		\hline
		\rule{0pt}{3ex} \text{Almost abstract} &  \text{Certain classes} &  \text{Free amalgam} & \text{\cite{geom_lat}} \\
	\text{elementary} &  \text{of geometric lattices} &  \text{of geometric}\\
	\text{classes\tablefootnote{I.e. classes of structures with a strong submodel relation which are not $\mathrm{AECs}$ but still behave well. E.g. in \cite{geom_lat} all the axioms of an $\mathrm{AEC}$ but the Smothness Axiom are satisfied,  but it is still
possible to define an independence notion along to the lines
of good frames.}} & \text{of fixed finite rank} &  \text{lattices$^2$} & \\
	\hline
	\hline
	\end{tabular}
\caption{Independence}\label{indep}\end{table}

	Another important case of independence which has been studied from the point of view of model theory is {\em stochastic independence}. In \cite{on_ran_var}, Itai Ben-Yaacov shows that the class of atomless probability algebras is elementary in continuous logic and that in this class non-forking corresponds to the familiar notion of independence of probability algebras, which in turn corresponds to independence of random variables, a.k.a. stochastic independence.
	 		
	In various papers on statistics and database theory, e.g. \cite{pearl}, \cite{sagiv} and \cite{studeny}, stochastic independence has been put in relation with a possibly less familiar notion of independence, known as embedded multivalued dependence or {\em database independence}, for simplicity. In these papers it is shown that these two forms of independence agree on a large group of axioms, which resemble very closely some basic axioms satisfied by non-forking in simple first-order theories, e.g. monotonicity, symmetry and transitivity. 
	
	We then ask: how does database independence relate to the general theory of independence as developed in model theory? Is there a way to make sense of the noticed similarities between stochastic independence and database independence from the point of view of model theory?
	
	In the following we answer positively both questions, showing that, in analogy with Ben Yaacov's work, database independence is reducible to first-order dividing calculus in the theory of atomless Boolean algebras. Our reduction shows that both database independence and stochastic independence are essentially based on the same independence relation, i.e. free amalgamation, considered in the category of Boolean algebras and probability algebras, respectively.

	The structure of the paper is as follows. In Section~\ref{preliminary_lemmas} we first give some basic definitions, and then introduce an independence relation between Boolean algebras and prove some preparatory lemmas. In Section~\ref{main_theorem} we prove our main theorem, which is a partial characterization of dividing in atomless Boolean algebra. In Section~\ref{sec_on_red} we then use this characterization to establish an abstract reduction of database independence to dividing in atomless Boolean algebras. In Section~\ref{emb_mult_dep} we introduce some basic definitions from database theory, formulate database independence, and make concrete the above mentioned reduction. Finally, we spend a few words on dependence logic, explaining why this form of independence can be identified with database independence and how the present work motivates probabilistic versions of independence logic, currently under development by the authors.

\section{Independence in Atomless Boolean Algebras}\label{preliminary_lemmas}

	For model-theoretic notations and conventions we refer to \cite{marker}. As usual with contemporary model theory, we work in large homogeneous models with a high degree of saturation, also known as {\em monster models}.

	\begin{definition} Let $T$ be a complete first-order theory with infinite models, and $\mathfrak{M}$ its monster model. For $a \in \mathfrak{M}^{n}$, $b \in \mathfrak{M}^m$ and $C \subseteq \mathfrak{M}$ we say that $a$ is {\em dividing independent} from $b$ over $C$, in symbols $a \divindep[C] b$, if for every $(b_i)_{i< \omega} \in (\mathfrak{M}^m)^{\omega}$ indiscernible over $C$ such that $b_0 = b$, there exists $a' \in \mathfrak{M}^n$ such that:
		\begin{enumerate}[i)]
			\item $\mathrm{tp}(a'/Cb) = \mathrm{tp}(a/Cb)$;
			\item $(b_i)_{i< \omega}$ is indiscernible over $Ca'$.
	\end{enumerate}	
\end{definition}

For notions and notation concerning Boolean algebras we refer to \cite{halmos}, for the model-theoretic aspects of the theory of Boolean algebras to \cite{poizat}. We denote by $\mathrm{BA}$ the theory of Boolean algebras in the signature $L = \left\{ 0, 1, \wedge, \vee, \neg \right\}$, with $\mathrm{ABA}$ the theory of atomless Boolean algebras and with $\mathrm{ATBA}$ the theory of atomic Boolean algebras.
	It is well-known that the theory $\mathrm{ABA}$ is complete and admits elimination of quantifiers. This theory is neither $\mathrm{NIP}$ nor $\mathrm{NSOP}$, and in particular it is not simple.	
 	Analogously, the theory $\mathrm{ATBA}$ is complete and admits elimination of quantifiers in the language extended with predicates $(A_i(x))_{i < \omega}$ saying that $x$ dominates at most $i$ atoms.

	The class of Boolean algebras with homomorphisms of Boolean algebras as morphisms is a category, we denote it by $\mathbf{BolAlg}$. Given $\mathcal{A} \models \mathrm{BA}$ and $\mathcal{B}$ a $\left\{ 0, 1, \wedge, \vee, \neg \right\}$-structure, we denote by $\mathcal{B} \leq \mathcal{A}$ the substructure relation (notice that $\mathrm{BA}$ is a $\forall$-theory and so its class of models is closed under taking substructures). For $\mathcal{A}, \mathcal{B} \models \mathrm{BA}$ and $\mathcal{C} \leq \mathcal{A}, \mathcal{B}$, we denote by $\mathcal{A} \otimes_{\mathcal{C}} \mathcal{B}$ the {\em push-out} of $\mathcal{A}$ and $\mathcal{B}$ over $\mathcal{C}$ in $\mathbf{BolAlg}$. The object $\mathcal{A} \otimes_{\mathcal{C}} \mathcal{B}$ can be shown to be the Boolean algebras corresponding to the tensor product of the Boolean rings corresponding to $\mathcal{A}$, $\mathcal{B}$ and $\mathcal{C}$, respectively.

	\begin{definition}\label{independence} Let $\mathcal{D} \models \mathrm{BA}$, $\mathcal{A}, \mathcal{B} \leq \mathcal{D}$ and $\mathcal{C} \leq \mathcal{A}, \mathcal{B}$. We let
		\[ \mathcal{A} \pureindep[\mathcal{C}] \mathcal{B} \;\; \Leftrightarrow \;\; \langle A \cup B \rangle \cong \mathcal{A} \otimes_{\mathcal{C}} \mathcal{B}.\]	
\end{definition}

	This structural definition of independence between Boolean algebras is equivalent to a more intuitive condition of topological nature.

	\begin{lemma} Let $\mathcal{D} \models \mathrm{BA}$, $\mathcal{A}, \mathcal{B} \leq \mathcal{D}$ and $\mathcal{C} \leq \mathcal{A}, \mathcal{B}$. Then
	\[ \mathcal{A} \pureindep[\mathcal{C}] \mathcal{B} \;\; \Leftrightarrow \;\; \forall a \in A \text{ and } b \in B \, \bigg[ \; a \leq b \Rightarrow \exists c \in C \; a \leq c \leq b \; \bigg]. \]			
\end{lemma}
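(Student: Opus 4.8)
The plan is to exploit the universal property of the push-out together with the fact that $\langle A\cup B\rangle$ is always a canonical quotient of $\mathcal{A}\otimes_{\mathcal{C}}\mathcal{B}$. First I would observe that, since the inclusions $\mathcal{A},\mathcal{B}\leq\mathcal{D}$ restrict to the same map on $\mathcal{C}$, they induce a homomorphism $\varphi\colon\mathcal{A}\otimes_{\mathcal{C}}\mathcal{B}\to\mathcal{D}$ with $\varphi(a\otimes b)=a\wedge b$, whose image is exactly $\langle A\cup B\rangle$; hence $\mathcal{A}\pureindep[\mathcal{C}]\mathcal{B}$ holds iff $\varphi$ is injective. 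Every element of $\mathcal{A}\otimes_{\mathcal{C}}\mathcal{B}$ is a finite join $\bigvee_{i\leq n}(a_i\otimes b_i)$ of ``rectangles'' ($a_i\in A$, $b_i\in B$), because finite joins of rectangles are already closed under complement and meet, and a join is $0$ precisely when each joinand is; so $\varphi$ is injective iff for all $a\in A$, $b\in B$, $a\wedge b=0$ in $\mathcal{D}$ implies $a\otimes b=0$ in $\mathcal{A}\otimes_{\mathcal{C}}\mathcal{B}$. Writing $b$ as $\neg b$, this is exactly the displayed condition, so it remains to prove this reformulated equivalence.

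For the direction $(\Leftarrow)$ I would take $a\in A$, $b\in B$ with $a\wedge b=0$, i.e. $a\leq\neg b$ with $a\in A$ and $\neg b\in B$, and use the hypothesis to get $c\in C$ with $a\leq c\leq\neg b$. Then in $\mathcal{A}\otimes_{\mathcal{C}}\mathcal{B}$ one computes $a\otimes b=(a\otimes 1)\wedge(1\otimes b)$, and $a\leq c$ in $\mathcal{A}$ gives $a\otimes 1=(a\otimes 1)\wedge(c\otimes 1)$; since $c\in C$ the push-out identifies $c\otimes 1=1\otimes c$, and $c\leq\neg b$ in $\mathcal{B}$ gives $(1\otimes c)\wedge(1\otimes b)=1\otimes(c\wedge b)=0$, whence $a\otimes b=0$.

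For $(\Rightarrow)$, assuming $\varphi$ is an isomorphism, I would suppose toward a contradiction that $a\leq b$ ($a\in A$, $b\in B$) with no $c\in C$ satisfying $a\leq c\leq b$. Then the filter $F=\{c\in C:a\leq c\}$ and the ideal $I=\{c\in C:c\leq b\}$ of $C$ are disjoint, so by the prime ideal theorem there is an ultrafilter $u$ of $C$ with $F\subseteq u$ and $u\cap I=\emptyset$. The filter of $\mathcal{A}$ generated by $u\cup\{a\}$ is proper (if $c\wedge a=0$ for some $c\in u$ then $\neg c\in F\subseteq u$, impossible), so it extends to an ultrafilter $p$ of $\mathcal{A}$ with $a\in p$ and $p\cap C=u$; likewise $u\cup\{\neg b\}$ generates a proper filter of $\mathcal{B}$ (if $c\wedge\neg b=0$ for $c\in u$ then $c\in I$, impossible), yielding an ultrafilter $q$ of $\mathcal{B}$ with $\neg b\in q$ and $q\cap C=u$. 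The two homomorphisms into the two-element Boolean algebra $\{0,1\}$ given by $p$ and $q$ agree on $\mathcal{C}$, so by the universal property of the push-out they factor through some $h\colon\mathcal{A}\otimes_{\mathcal{C}}\mathcal{B}\to\{0,1\}$ with $h(a\otimes 1)=h(1\otimes\neg b)=1$, hence $h(a\otimes\neg b)=1$ and $a\otimes\neg b\neq 0$; but $\varphi(a\otimes\neg b)=a\wedge\neg b=0$, contradicting injectivity.

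The hard part is this last direction: one must build, out of the failure of the separation condition, a homomorphism of $\mathcal{A}\otimes_{\mathcal{C}}\mathcal{B}$ onto $\{0,1\}$ not killing $a\otimes\neg b$ — that is, a compatible pair of ultrafilters of $\mathcal{A}$ and $\mathcal{B}$ — and the only real content is checking that the relevant generated filters are proper, the inputs being the prime ideal theorem and the universal property of the push-out. (Dually: $\mathrm{St}(\mathcal{A}\otimes_{\mathcal{C}}\mathcal{B})$ is the fibre product $\mathrm{St}(\mathcal{A})\times_{\mathrm{St}(\mathcal{C})}\mathrm{St}(\mathcal{B})$, and the condition says that the images in $\mathrm{St}(\mathcal{C})$ of two disjoint clopens can be separated by a clopen, which is immediate by compactness.)
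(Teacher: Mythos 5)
Your proof is correct, but note that the paper does not prove this lemma at all: it simply cites Avil\'es--Brech \cite{aviles}, so your argument is a genuinely self-contained alternative. Your route is the natural one: read $\mathcal{A}\pureindep[\mathcal{C}]\mathcal{B}$ as saying that the canonical homomorphism $\varphi\colon\mathcal{A}\otimes_{\mathcal{C}}\mathcal{B}\to\langle A\cup B\rangle$ is injective (this reading is essential --- with a merely abstract isomorphism the lemma would be false, e.g.\ $\mathcal{A}=\mathcal{B}=\mathcal{D}$ countable atomless over $\mathcal{C}=2$, where both sides are countable atomless but the separation condition fails --- so it is good that you made it explicit); use the normal form of elements of the push-out as finite joins of rectangles to reduce injectivity to the single-rectangle condition $a\wedge b=0\Rightarrow a\otimes b=0$; get one direction by the computation $a\otimes b\leq(c\otimes 1)\wedge(1\otimes b)=(1\otimes c)\wedge(1\otimes b)=0$; and get the other by separating the filter $\{c\in C: a\leq c\}$ from the ideal $\{c\in C: c\leq b\}$ with an ultrafilter of $\mathcal{C}$, lifting it to compatible ultrafilters of $\mathcal{A}$ and $\mathcal{B}$, and invoking the universal property to obtain a homomorphism to $\{0,1\}$ not killing $a\otimes\neg b$. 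All the steps check out: the properness verifications of the generated filters are exactly right, and the closing remark that this is Stone-dually the statement $\mathrm{St}(\mathcal{A}\otimes_{\mathcal{C}}\mathcal{B})\cong\mathrm{St}(\mathcal{A})\times_{\mathrm{St}(\mathcal{C})}\mathrm{St}(\mathcal{B})$ is the conceptual content of the cited reference. The only cosmetic slip is the sentence ``Writing $b$ as $\neg b$, this is exactly the displayed condition'': at that point you have only reformulated injectivity, and the equivalence with the displayed separation condition is what the remaining two paragraphs prove; phrasing it as the statement still to be established (as you in fact then do) would be cleaner. What your approach buys is a proof from first principles (universal property plus the prime ideal theorem); what the paper's citation buys is brevity, since Avil\'es--Brech develop the push-out machinery in greater generality.
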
	

	\begin{proof} See~\cite{aviles}.	
\end{proof}

	Given $\mathcal{A} \models \mathrm{BA}$, we denote by $\mathrm{At}(\mathcal{A})$ the set of atoms of $\mathcal{A}$. We state few elementary facts about Boolean algebras that will be used freely throughout the paper.
	
	\begin{proposition} Let $\mathcal{A} \models \mathrm{BA}$ and $a \in A$. The following are equivalent.
		\begin{enumerate}[i)]
			\item $a$ is an atom.
			\item For every $b \in A$ either $a \leq b$ or $a \wedge b = 0$, but not both.
	\end{enumerate}		
\end{proposition}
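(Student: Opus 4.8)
The plan is to verify the two implications directly from the definition of an atom as a \emph{minimal nonzero} element of $\mathcal{A}$, using only the elementary Boolean identities $x \wedge y = x \Leftrightarrow x \leq y$ and $x \wedge 0 = 0$, together with the fact that $x \leq 0 \Leftrightarrow x = 0$.

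For the implication $i) \Rightarrow ii)$, I would fix an arbitrary $b \in A$ and apply the minimality of $a$ to the element $a \wedge b$, which satisfies $0 \leq a \wedge b \leq a$. Minimality forces either $a \wedge b = 0$ or $a \wedge b = a$; in the latter case $a \wedge b = a$ is exactly $a \leq b$, so the disjunction in $ii)$ holds. To see that the two alternatives are mutually exclusive, note that $a \leq b$ together with $a \wedge b = 0$ would give $a = a \wedge b = 0$, contradicting the fact that an atom is nonzero.

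For the converse $ii) \Rightarrow i)$, I would first recover $a \neq 0$ by instantiating $ii)$ at $b = 0$: since $a \wedge 0 = 0$ always holds, the exclusivity clause rules out $a \leq 0$, hence $a \neq 0$. Then, to establish minimality, take any $c \in A$ with $0 \leq c \leq a$ and apply $ii)$ at $b = c$; since $c \leq a$ gives $a \wedge c = c$, the disjunction becomes ``$a \leq c$ or $c = 0$'', and in the first case $c \leq a \leq c$ yields $c = a$. Thus $a$ is nonzero and minimal, i.e. an atom.

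There is no genuine obstacle here; the only point that needs a moment's care is the bookkeeping around the exclusive ``but not both'' clause, which is precisely where the nonzeroness of $a$ is used in the forward direction and recovered in the backward direction.
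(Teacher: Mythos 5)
Your proof is correct. The paper states this proposition without proof, listing it among a few elementary facts about Boolean algebras to be used freely, and your argument is the standard verification from the definition of an atom as a minimal nonzero element; in particular your handling of the exclusive ``but not both'' clause (using $a \neq 0$ in the forward direction and recovering it via $b = 0$ in the converse) is exactly what is needed.
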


	\begin{proposition} Let $\mathcal{A} \models \mathrm{BA}$ and $a \in A$. The following are equivalent.
		\begin{enumerate}[i)]
			\item $\mathcal{A}$ is atomic;
			\item Every $a \in A$ is the supremum of the atoms that it dominates.
			\item The unit is the supremum of the set of all atoms.
	\end{enumerate}		
\end{proposition}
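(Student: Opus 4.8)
The plan is to establish the cycle (i) $\Rightarrow$ (ii) $\Rightarrow$ (iii) $\Rightarrow$ (i), keeping in mind that $\mathcal{A}$ need not be complete, so ``supremum'' must be read as ``a least upper bound, which exists and equals the indicated element''; thus part of the content of (ii) and (iii) is an existence claim, and in each implication one verifies separately that the claimed element is an upper bound and that it is dominated by every upper bound. Two of the three implications are immediate. For (ii) $\Rightarrow$ (iii) one just applies (ii) to $b = 1$: since $1$ dominates every atom, the atoms it dominates are precisely the elements of $\mathrm{At}(\mathcal{A})$, so $1 = \sup \mathrm{At}(\mathcal{A})$. For (iii) $\Rightarrow$ (i), suppose toward a contradiction that some nonzero $b \in A$ dominates no atom; by the atom characterization in the previous proposition, each atom $p$ satisfies $p \leq b$ or $p \wedge b = 0$, and since the first alternative never holds we get $p \leq \neg b$ for every atom $p$. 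Hence $\neg b$ is an upper bound of $\mathrm{At}(\mathcal{A})$, and as by (iii) the unit is the least such, $1 \leq \neg b$, forcing $b = 0$, a contradiction; therefore every nonzero element of $\mathcal{A}$ dominates an atom, i.e. $\mathcal{A}$ is atomic.

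The one implication that requires an actual argument is (i) $\Rightarrow$ (ii). Given $b \in A$, I would put $P = \{ p \in \mathrm{At}(\mathcal{A}) : p \leq b \}$; plainly $b$ is an upper bound of $P$, so it remains to show that $b \leq c$ for every upper bound $c$ of $P$. If this failed, then $b \wedge \neg c \neq 0$, and atomicity would supply an atom $p$ with $p \leq b \wedge \neg c$. But $p \leq b$ gives $p \in P$, hence $p \leq c$, while $p \leq \neg c$ yields $p \leq c \wedge \neg c = 0$, contradicting that $p$ is an atom. So $b$ is the least upper bound of $P$, which is exactly (ii).

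I do not anticipate any genuine obstacle: the proof is a routine unwinding of the definitions, and the only point deserving care is not to invoke completeness of $\mathcal{A}$ — every ``$\sup$'' must be handled as a least-upper-bound statement verified by hand. The inputs used are precisely the characterization of atoms from the preceding proposition and the definition of atomicity as ``every nonzero element dominates an atom''.
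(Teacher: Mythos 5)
Your proof is correct: each implication in the cycle (i) $\Rightarrow$ (ii) $\Rightarrow$ (iii) $\Rightarrow$ (i) is carried out properly, with suprema rightly treated as least-upper-bound claims inside $\mathcal{A}$ rather than via any completeness assumption, and the use of the atom characterization in (iii) $\Rightarrow$ (i) is exactly what is needed. The paper itself states this proposition without proof, as one of the elementary facts about Boolean algebras used freely throughout, and your argument is the standard one that would be supplied.
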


	\begin{proposition}\label{distributivity} Let $\mathcal{A} \models \mathrm{BA}$, $a \in A$ and $(b_i)_{i \in I} \in A^I$. If $\bigvee_{i \in I} b_i$ exists, then
		\[ a \wedge \bigvee_{i \in I} b_i = \bigvee_{i \in I}(a \wedge b_i). \]
In particular, the supremum on the right-hand side of the equation exists.	
\end{proposition}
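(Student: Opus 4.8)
The plan is to verify directly that $a \wedge \bigvee_{i \in I} b_i$ satisfies the universal property of the supremum of $\{a \wedge b_i : i \in I\}$. Write $s = \bigvee_{i \in I} b_i$, which exists by hypothesis. First I would check that $a \wedge s$ is an upper bound for this set: since $b_i \leq s$ and meet is monotone, $a \wedge b_i \leq a \wedge s$ for every $i \in I$.

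The substantive step is to show that $a \wedge s$ is the \emph{least} such upper bound. Let $u \in A$ be arbitrary with $a \wedge b_i \leq u$ for all $i \in I$. The key trick is to pass to the element $u \vee \neg a$ and observe that it dominates every $b_i$: using finite distributivity, $b_i = (b_i \wedge a) \vee (b_i \wedge \neg a) \leq u \vee \neg a$, since $b_i \wedge a \leq u$ by assumption and $b_i \wedge \neg a \leq \neg a$. Hence $u \vee \neg a$ is an upper bound for $(b_i)_{i \in I}$, so by the defining property of $s = \bigvee_{i\in I} b_i$ we get $s \leq u \vee \neg a$. Meeting with $a$ and applying finite distributivity once more, $a \wedge s \leq a \wedge (u \vee \neg a) = (a \wedge u) \vee (a \wedge \neg a) = (a \wedge u) \vee 0 \leq u$.

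Combining the two steps, $a \wedge s$ is a least upper bound for $\{a \wedge b_i : i \in I\}$; in particular this supremum exists and equals $a \wedge \bigvee_{i \in I} b_i$, as claimed. There is no genuine obstacle here beyond noticing that infinite distributivity in an arbitrary Boolean algebra is not a formal consequence of finite distributivity: the only point requiring care is the passage to $u \vee \neg a$, which uses complementation in an essential way.
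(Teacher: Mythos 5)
Your proof is correct and complete: the verification that $a \wedge \bigvee_{i \in I} b_i$ is an upper bound is immediate, and the passage to $u \vee \neg a$ (using complementation together with finite distributivity) is exactly the right device to show minimality, yielding in particular the existence of $\bigvee_{i \in I}(a \wedge b_i)$. The paper itself offers no proof --- it lists this as one of the elementary facts about Boolean algebras to be used freely, deferring to the standard literature --- and your argument is precisely the standard one, so there is nothing to reconcile.
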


	\begin{notation}In Proposition \ref{distributivity}, we considered sups of families of points of a Boolean algebras without specifying the algebra with respect to which they were considered. This lack of notation may lead to some problems, in fact given $\mathcal{C} \leq \mathcal{A}$ and $(c_j)_{j \in J} \in C^J$ there may be a point $c \in C$ such that $c$ is the sup of the $c_j$ with respect to the algebra $\mathcal{C}$ but not with respect to the algebra $\mathcal{A}$. When we want to make clear with respect to which algebra we are considering a sup (resp. an inf) we use the notation $(\bigvee_{j \in J} c_j)^{\mathcal{A}}$ (resp. $(\bigwedge_{j \in J} c_j)^{\mathcal{A}}$), where obviously the superscript indicates the algebra with respect to which the sup (resp. the inf) is taken. This notation is heavier and it will be avoided when possible, it is, though, of crucial importance in the following definitions.
\end{notation}

	\begin{definition}[Regular Subalgebra \cite{halmos}] Let $\mathcal{A} \models \mathrm{BA}$ and $\mathcal{C} \leq \mathcal{A}$. We say that $\mathcal{C}$ is a {regular subalgebra} of $\mathcal{A}$ if whenever $(c_i)_{i \in I} \in C^I$ and $(\bigvee_{i \in I} c_i)^{\mathcal{C}}$ exists, then $(\bigvee_{i \in I} c_i)^{\mathcal{C}} = (\bigvee_{i \in I} c_i)^{\mathcal{A}}$.
\end{definition}

	\begin{definition} Let $\mathcal{A} \models \mathrm{BA}$ and $\mathcal{C} \leq \mathcal{A}$. We say that $\mathcal{C}$ sits nicely in $\mathcal{A}$, in symbols $\mathcal{C} \leq^* \mathcal{A}$, if the following conditions are satisfied:
		\begin{enumerate}[i)]
			\item $\mathcal{C}$ is a regular subalgebra of $\mathcal{A}$;
			\item for every $a \in A$ there exists a least $c \in C$ such that $a \leq c$.
		\end{enumerate}
\end{definition}

	Under the assumptions of atomicity of $\mathcal{C}$ and that $\mathcal{C}$ sits nicely in $\mathcal{A}$ and $\mathcal{B}$ we can give a characterization of the relation $\mathcal{A} \pureindep[\mathcal{C}] \mathcal{B}$ which will be very useful in the proof of the main theorem. The proof of this characterization is fairly standard, we include it for completeness of exposition.

	\begin{lemma}\label{referee_lemma} Let $\mathcal{D} \models \mathrm{BA}$, $\mathcal{A}, \mathcal{B} \leq \mathcal{D}$ and $\mathcal{C} \leq^* \mathcal{A}, \mathcal{B}$, with $\mathcal{C}$ atomic. Then 
	
			\[ \begin{array}{rcl}
	\mathcal{A} \displaystyle \pureindep[\mathcal{C}] \mathcal{B}	& \Leftrightarrow & \;\;\;\;\;\;\;\;\;\;\;\;\;\;\;\;  \forall c \in \mathrm{At}(\mathcal{C}) \text{, } a \in A \text{ and } b \in B \\
     	    &  & \bigg[ \; a, b \leq c \text{ and } a, b \not\in \left\{ 0, c \right\} \Rightarrow a \text{ incomparable to } b \, \big] \; \bigg]. 

\end{array} \]		
\end{lemma}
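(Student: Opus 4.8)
The plan is to deduce the equivalence from the topological characterization of $\pureindep[\mathcal{C}]$ proved in the previous lemma, namely that $\mathcal{A} \pureindep[\mathcal{C}] \mathcal{B}$ holds if and only if for all $a \in A$ and $b \in B$ with $a \leq b$ there is $c \in C$ with $a \leq c \leq b$. I would first record that $\pureindep[\mathcal{C}]$ is a symmetric relation, since $\mathcal{A} \otimes_{\mathcal{C}} \mathcal{B} \cong \mathcal{B} \otimes_{\mathcal{C}} \mathcal{A}$; hence this separation property may also be applied with the roles of $\mathcal{A}$ and $\mathcal{B}$ interchanged. It is also worth noting in advance that neither direction will use regularity of $\mathcal{C}$: only clause (ii) of $\mathcal{C} \leq^* \mathcal{A}, \mathcal{B}$ and the atomicity of $\mathcal{C}$ enter.

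For the left-to-right implication I would assume $\mathcal{A} \pureindep[\mathcal{C}] \mathcal{B}$ and take $c \in \mathrm{At}(\mathcal{C})$ together with $a \in A$ and $b \in B$ such that $a, b \leq c$ and $a, b \notin \{0, c\}$. If $a$ and $b$ were comparable, say $a \leq b$ (the other case being symmetric), the separation property would yield $c' \in C$ with $a \leq c' \leq b \leq c$; since $c$ is an atom of $\mathcal{C}$ and $c' \in C$ this forces $c' = 0$ or $c' = c$, and in the first case $a = 0$ while in the second $b = c$, both against our hypotheses. Hence $a$ and $b$ are incomparable.

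For the converse I would assume the condition on the right and verify the separation property. Given $a \in A$ and $b \in B$ with $a \leq b$, let $c_a \in C$ be least with $a \leq c_a$ (available since $\mathcal{C} \leq^* \mathcal{A}$) and let $c_b \in C$ be greatest with $c_b \leq b$ (available since, by $\mathcal{C} \leq^* \mathcal{B}$ and closure of $C$ under complementation, every element of $B$ has a greatest lower bound in $C$). Since $a \leq c_a$ and $c_b \leq b$, it suffices to prove $c_a \leq c_b$. Assuming otherwise, $c_a \wedge \neg c_b \neq 0$, so by atomicity of $\mathcal{C}$ there is $c \in \mathrm{At}(\mathcal{C})$ with $c \leq c_a \wedge \neg c_b$; in particular $c \leq c_a$ and $c \wedge c_b = 0$. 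The intended witnesses to a failure of the right-hand condition are $a' := a \wedge c \in A$ and $b' := b \wedge c \in B$: both are $\leq c$, and $a \leq b$ gives $a' \leq b'$, so $a'$ and $b'$ are comparable.

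The crux of the argument --- and the step I expect to require the most care --- is checking that $a'$ and $b'$ avoid $\{0, c\}$, since this is precisely where the extremality of $c_a$ and $c_b$ is used. If $a' = 0$ then $a \leq \neg c \in C$, so minimality of $c_a$ gives $c_a \leq \neg c$, whence $c = c \wedge c_a = 0$, impossible; then $a' \leq b'$ forces $b' \neq 0$. If $b' = c$ then $c \leq b$ with $c \in C$, so maximality of $c_b$ gives $c \leq c_b$, whence $c = c \wedge c_b = 0$, again impossible; and since $a' \leq b' \leq c$, if $a' = c$ then $b' = c$, so $a' \neq c$ too. Thus $c \in \mathrm{At}(\mathcal{C})$, $a' \in A$, $b' \in B$, $a', b' \leq c$, $a', b' \notin \{0, c\}$, and $a', b'$ are comparable --- contradicting the right-hand condition. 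Hence $c_a \leq c_b$, the separation property holds, and together with the forward implication this gives the lemma.
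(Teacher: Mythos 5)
Your proof is correct, and its second half takes a genuinely different route from the paper's. The forward direction is essentially the paper's argument: both apply the separation characterization $a \leq b \Rightarrow \exists c' \in C\, (a \leq c' \leq b)$ and kill $c'$ with the atom $c$; the paper disposes of the case $b \leq a$ by passing to complements $\neg a \leq \neg b$, while you invoke symmetry of the push-out, which amounts to the same observation. For the converse, the paper takes the least $c \in C$ above $a$, writes it as the supremum of the atoms of $\mathcal{C}$ below it, uses the \emph{regularity} clause of $\mathcal{C} \leq^* \mathcal{A}, \mathcal{B}$ to transfer this (possibly infinite) supremum to $\mathcal{A}$ and $\mathcal{B}$, and shows atom by atom that $c \leq b$. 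You instead compare the least element $c_a$ of $C$ above $a$ with the greatest element $c_b$ of $C$ below $b$ (extracted from clause (ii) applied to $\neg b$ together with closure of $C$ under complementation), and refute $c_a \nleq c_b$ by choosing a single atom $c \leq c_a \wedge \neg c_b$ and checking that $a \wedge c$ and $b \wedge c$ violate the right-hand side; the extremality of $c_a$ and $c_b$ is exactly what rules out the values $0$ and $c$, as you verify. This avoids infinite suprema entirely and, as you correctly note, never uses the regularity clause, so your argument proves the lemma under slightly weaker hypotheses (atomicity of $\mathcal{C}$ plus clause (ii) of $\leq^*$); what the paper's proof buys in exchange is a more global picture in which the regular-subalgebra condition, used elsewhere in Section 2, does the work of locating $c$ inside $\mathcal{B}$.
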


	\begin{proof} For the direction $(\Rightarrow)$, suppose there are $c \in \mathrm{At}(\mathcal{C})$, $a \in A$ and $b \in B$, such that $a, b \leq c$, $a, b \not\in \left\{ 0, c \right\}$, but $a \leq b$ or $b \leq a$. Suppose $a \leq b$, then it is impossible to find $c' \in C$ such that $0 < a \leq c' \leq b < c$, because $c$ is an atom. Suppose $b \leq a$, then $\neg a \leq \neg b$, and so again it is impossible to find $c' \in C$ such that $\neg c < \neg a \leq c' \leq \neg b < 1$, because $c$ is an atom.
	
	For the direction $(\Leftarrow)$, assume the right hand side and let $a \in A$ and $b \in B$, with $a \leq b$. Given that $\mathcal{C} \leq^* \mathcal{A}$, there is a least $c \in C$ such that $a \leq c$. We show that $c \leq b$. By atomicity of $\mathcal{C}$, we can find distinct $(c_i)_{i \in I} \in \mathrm{At}(\mathcal{C})^I$ such that $c = (\bigvee_{i \in I} c_i)^{\mathcal{C}}$, and, by the regularity assumption, $(\bigvee_{i \in I} c_i)^{\mathcal{C}} = (\bigvee_{i \in I} c_i)^{\mathcal{A}} = (\bigvee_{i \in I} c_i)^{\mathcal{B}}$. It suffices to show that for every $j \in I$ we have that $c_j \leq b$, because then $c = (\bigvee_{i \in I} c_i)^{\mathcal{C}} = (\bigvee_{i \in I} c_i)^{\mathcal{B}} \leq b$. Let then $j \in I$. By assumption $a \leq b$, so we have that $a \wedge c_j \leq b \wedge c_j \leq c_j$ and then, by the right hand side, $a \wedge c_j \in \left\{ 0, c_j \right\}$ or $b \wedge c_j \in \left\{ 0, c_j \right\}$. Now, if $a \wedge c_j \in \left\{ 0, c_j \right\}$, then $a \leq (\bigvee_{i \in I - \left\{ j \right\}} c_i)^\mathcal{C} = (\bigvee_{i \in I - \left\{ j \right\}} c_i)^\mathcal{A} < c$, contradicting the fact that $c$ is least. On the other hand, if $b \wedge c_j = 0$, then $a \wedge b \wedge c_j = a \wedge c_j = 0$, which as already been shown impossible. Then, we must have that $b \wedge c_j = c_j$, i.e. $c_j \leq b$, as wanted.

\end{proof}

	If on top of the assumptions of the previous lemma we further assume that $\mathcal{A}$ and $\mathcal{B}$ are atomic we can improve our characterization of independence of Boolean algebras. This characterization will not be needed in the proof of the main theorem, but it will simplify the arguments in Section \ref{sec_on_red}. We include it here for coherence of exposition. We first need a couple of lemmas, whose proofs are also standard.

\begin{lemma}\label{unicity_of_atom} Let $\mathcal{A} \models \mathrm{BA}$ and $\mathcal{C} \leq \mathcal{A}$, with $\mathcal{C}$ atomic and regular in $\mathcal{A}$. Then, for every $a \in \mathrm{At}(\mathcal{A})$ there exists exactly one $c \in \mathrm{At}(\mathcal{C})$ such that $a \leq c$.	
\end{lemma}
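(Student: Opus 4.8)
The plan is to transport the atomic decomposition of the unit from $\mathcal{C}$ up to $\mathcal{A}$ using regularity, and then cut it down by $a$ using infinite distributivity (Proposition~\ref{distributivity}).

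First I would use atomicity of $\mathcal{C}$: by the characterization of atomic Boolean algebras, the unit satisfies $1 = \big(\bigvee_{c \in \mathrm{At}(\mathcal{C})} c\big)^{\mathcal{C}}$. Since this supremum exists in $\mathcal{C}$ and $\mathcal{C}$ is regular in $\mathcal{A}$, the definition of regular subalgebra upgrades it to $1 = \big(\bigvee_{c \in \mathrm{At}(\mathcal{C})} c\big)^{\mathcal{A}}$.

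Next, fix $a \in \mathrm{At}(\mathcal{A})$. Meeting both sides of the last equation with $a$ and applying Proposition~\ref{distributivity} (whose conclusion also guarantees that the relevant supremum exists) gives
\[ a \;=\; a \wedge 1 \;=\; \Big(\bigvee_{c \in \mathrm{At}(\mathcal{C})} (a \wedge c)\Big)^{\mathcal{A}}. \]
Since $a \neq 0$, the meets $a \wedge c$ cannot all equal $0$, so there is some $c_0 \in \mathrm{At}(\mathcal{C})$ with $a \wedge c_0 \neq 0$. As $c_0 \in C \subseteq A$ and $a$ is an atom of $\mathcal{A}$, the proposition characterizing atoms (either $a \leq c_0$ or $a \wedge c_0 = 0$) forces $a \leq c_0$, establishing existence.

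For uniqueness, suppose $a \leq c$ and $a \leq c'$ with $c, c' \in \mathrm{At}(\mathcal{C})$. Then $0 \neq a \leq c \wedge c'$, so $c \wedge c' \neq 0$; since $c$ is an atom of $\mathcal{C}$ and $c \wedge c' \leq c$, we get $c \wedge c' = c$, i.e.\ $c \leq c'$, and symmetrically $c' \leq c$, so $c = c'$. I do not expect any real obstacle here; the only point requiring care is bookkeeping of the algebra with respect to which each supremum is taken, and that is precisely what the regularity hypothesis is there to control.
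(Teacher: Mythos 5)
Your proof is correct and follows essentially the same route as the paper: existence via atomicity of $\mathcal{C}$, regularity to transfer the supremum of atoms to $\mathcal{A}$, and Proposition~\ref{distributivity} to cut down by $a$, with uniqueness from the fact that distinct atoms of $\mathcal{C}$ meet to $0$. The only difference is cosmetic (you argue existence directly rather than by contradiction).
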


	\begin{proof} Let $a \in \mathrm{At}(\mathcal{A})$ and suppose that there are $c_0, c_1 \in \mathrm{At}(\mathcal{C})$ such that $a \leq c_0, c_1$ and $c_0 \neq c_1$. Then $a \leq c_0 \wedge c_1 = 0$, a contradiction. Suppose now that for every $c \in \mathrm{At}(\mathcal{C})$ we have that $a \nleq c$. Then $a \wedge c = 0$ for every $c \in \mathrm{At}(\mathcal{C})$, because $a$ is an atom of $\mathcal{A}$. Thus,
		\[a = a \wedge 1 = a \wedge (\bigvee_{c \in \mathrm{At}(\mathcal{C})} c)^\mathcal{C} = a \wedge (\bigvee_{c \in \mathrm{At}(\mathcal{C})} c)^\mathcal{A} = (\bigvee_{c \in \mathrm{At}(\mathcal{C})} (a \wedge c))^{\mathcal{A}} = 0, \]
a contradiction.
\end{proof}

	By Lemma~\ref{unicity_of_atom} whenever we have $\mathcal{A} \models \mathrm{BA}$ and $\mathcal{C} \leq \mathcal{A}$, with $\mathcal{C}$ atomic, we have a function $f: \mathrm{At}(\mathcal{A}) \rightarrow \mathrm{At}(\mathcal{C})$ which sends an atom $a$ to the unique atom $c$ that dominates $a$. We denote $f(a)$ by $\mathcal{C}^{\geq a}\!\! \downarrow$. 

	\begin{lemma}\label{char_indep_with_nice_alg} Let $\mathcal{D} \models \mathrm{BA}$, $\mathcal{A}, \mathcal{B} \leq \mathcal{D}$ and $\mathcal{C} \leq^* \mathcal{A}, \mathcal{B}$, with $\mathcal{A}, \mathcal{B}$ and $\mathcal{C}$ atomic. Then
		\[ \begin{array}{rcl}
	\mathcal{A} \displaystyle \pureindep[\mathcal{C}] \mathcal{B}	& \Leftrightarrow & \;\;\;\;\;\;\;\;\;\;\;\;\;\;\;\;  \forall a \in \mathrm{At}(\mathcal{A}) \text{ and } b \in \mathrm{At}(\mathcal{B}) \\
     	    &  & \bigg[ \; a \leq \neg b \Rightarrow \exists c \in \mathrm{At}(\mathcal{C}) \, \big[ \, a \leq \neg c \text{ and } b \leq c \, \big] \; \bigg]. 

\end{array} \]		

\end{lemma}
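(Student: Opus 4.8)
The plan is to reduce everything to Lemma~\ref{referee_lemma}, whose hypotheses ($\mathcal{C} \leq^* \mathcal{A}, \mathcal{B}$ with $\mathcal{C}$ atomic) are included in ours, and which already identifies $\mathcal{A} \pureindep[\mathcal{C}] \mathcal{B}$ with the statement that no atom $c$ of $\mathcal{C}$ dominates a pair of comparable elements $a \in A$, $b \in B$ with $a, b \notin \left\{ 0, c \right\}$. So it suffices to prove that, under the extra hypothesis that $\mathcal{A}$ and $\mathcal{B}$ are atomic, this condition is equivalent to the one displayed in the statement. I would prove both implications by contraposition, using atomicity of $\mathcal{A}$ and of $\mathcal{B}$ to pass down to atoms, and Lemma~\ref{unicity_of_atom} (i.e. the assignment $a \mapsto \mathcal{C}^{\geq a}\!\! \downarrow$) to pin down the relevant atom of $\mathcal{C}$.

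For the implication ``Lemma~\ref{referee_lemma}'s condition $\Rightarrow$ the displayed condition'', suppose the displayed condition fails: there are $a \in \mathrm{At}(\mathcal{A})$ and $b \in \mathrm{At}(\mathcal{B})$ with $a \leq \neg b$ but with no $c \in \mathrm{At}(\mathcal{C})$ such that $a \leq \neg c$ and $b \leq c$. Let $c_0 = \mathcal{C}^{\geq b}\!\! \downarrow$. Since $b \leq c_0$, the failure forces $a \wedge c_0 \neq 0$, hence $a \leq c_0$ because $a$ is an atom; and $b \neq c_0$, since otherwise $c_0 \in \mathrm{At}(\mathcal{C})$ would itself be a witness ($b \leq c_0$ and $a \leq \neg b = \neg c_0$). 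Therefore $0 < a < c_0$ and $0 < c_0 \wedge \neg b < c_0$; moreover $a \in A$, and $c_0 \wedge \neg b \in B$ because $c_0 \in C \subseteq B$, and $a \leq c_0 \wedge \neg b$ because $a \leq \neg b$. Taking the atom $c_0$ of $\mathcal{C}$ together with $a$ and $c_0 \wedge \neg b$, this is precisely a failure of Lemma~\ref{referee_lemma}'s condition.

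For the converse, suppose Lemma~\ref{referee_lemma}'s condition fails: there are $c \in \mathrm{At}(\mathcal{C})$, $a \in A$ and $b \in B$ with $0 < a, b < c$ and, say, $a \leq b$ (the case $b \leq a$ being entirely analogous, with the roles of $\mathcal{A}$ and $\mathcal{B}$ interchanged). By atomicity of $\mathcal{A}$ pick an atom $a_0 \leq a$; since $b < c$ we have $c \wedge \neg b \neq 0$, and $c \wedge \neg b \in B$, so by atomicity of $\mathcal{B}$ pick an atom $b_0 \leq c \wedge \neg b$. Then $a_0 \wedge b_0 \leq b \wedge \neg b = 0$, so $a_0 \leq \neg b_0$, while $a_0, b_0 \leq c$ give $\mathcal{C}^{\geq a_0}\!\! \downarrow = c = \mathcal{C}^{\geq b_0}\!\! \downarrow$ by Lemma~\ref{unicity_of_atom}. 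Hence the only possible witness $c' \in \mathrm{At}(\mathcal{C})$ for the displayed condition applied to $(a_0, b_0)$ would have to be $c$ itself (the unique atom of $\mathcal{C}$ above $b_0$), but $a_0 \leq c$ rules out $a_0 \leq \neg c$; so the displayed condition fails as well.

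I do not expect a real obstacle here: the content is already packaged in Lemma~\ref{referee_lemma} and Lemma~\ref{unicity_of_atom}, and what remains is the bookkeeping of the degenerate cases — in particular checking that the elements produced lie strictly between $0$ and the relevant atom of $\mathcal{C}$, and ruling out $b = c_0$ in the first direction — all of which is immediate from the hypothesis $a, b \notin \left\{ 0, c \right\}$ and from atoms being minimal nonzero elements.
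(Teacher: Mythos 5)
Your proof is correct, but it takes a genuinely different route from the paper's. The paper proves the lemma directly from the interpolation characterization of $\pureindep[\mathcal{C}]$ (the lemma quoted from Avil\'es--Brech): for the nontrivial direction it takes arbitrary disjoint $a \in A$, $b \in B$, decomposes both into atoms, collapses the atom-wise witnesses via $\mathcal{C}^{\geq b_j}\!\!\downarrow$, and then builds the separating element $c = (\bigvee_{j} c_j^{*})^{\mathcal{C}}$, using regularity and $\leq^*$ to ensure this sup exists and is computed the same way in $\mathcal{A}$, $\mathcal{B}$ and $\mathcal{C}$, finally checking $b \leq c \leq \neg a$. You instead route both directions through Lemma~\ref{referee_lemma}, reducing the statement to the equivalence of two ``local'' conditions beneath a single atom of $\mathcal{C}$, and your contrapositive arguments (producing the comparable pair $a$, $c_0 \wedge \neg b$ below $c_0 = \mathcal{C}^{\geq b}\!\!\downarrow$ in one direction, and the atoms $a_0 \leq a$, $b_0 \leq c \wedge \neg b$ with $\mathcal{C}^{\geq b_0}\!\!\downarrow = c$ in the other) are sound; the degenerate-case checks you defer ($a \neq c_0$, $c_0 \wedge \neg b \notin \{0, c_0\}$) are indeed immediate. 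What your approach buys is the complete avoidance of the infinitary sup construction and the attendant regularity bookkeeping, at the price of leaning on Lemma~\ref{referee_lemma} (which the paper has already established, so this is legitimate); what the paper's approach buys is an explicit construction of the interpolating element of $C$, verifying the independence criterion directly rather than through the atom-of-$\mathcal{C}$ reformulation.
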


	\begin{proof} For the direction $(\Rightarrow)$, let $a \in \mathrm{At}(\mathcal{A})$ and $b \in \mathrm{At}(\mathcal{B})$, and suppose that $a \wedge b = 0$. By hypothesis there is $c \in C$ such that $a \wedge c = 0$ and $b \wedge \neg c = 0$. Let $\mathcal{C}^{\geq b}\!\! \downarrow = c^*$. Notice that $c \wedge c^* \neq 0$ because otherwise $b = 0$, given that $b \leq c, c^*$. But then $c^* \leq c$ because $c^* \in \mathrm{At}(\mathcal{C})$ and $c \in C$. Hence, $a \leq \neg c^*$ and $b \leq c^*$ because $a \leq \neg c \leq \neg c^*$.
		
		For the direction $(\Leftarrow)$, let $a \in A$ and $b \in B$, and suppose that $a \wedge b = 0$. If $a = 0$ or $b = 0$, we can find the wanted $c$, in the first case $c = 1$ and in the second $c = 0$. Suppose that $a, b \neq 0$, and let $(a_i)_{i \in I} \in \mathrm{At}(\mathcal{A})^I$ and $(b_j)_{j \in J} \in \mathrm{At}(\mathcal{B})^J$ such that 
		\[ a = \bigvee_{i \in I} a_i \; \text{ and } \; b = \bigvee_{j \in J} b_j. \]
Notice that $a_i \wedge b_j = 0$ for every $i \in I$ and $j \in J$, because $a \wedge b = 0$. Thus, by assumption, for every $i \in I$ and $j \in J$ there is $c_{i, j} \in \mathrm{At}(\mathcal{C})$ such that
		\[ a_i \wedge c_{i, j} = 0 \; \text{ and } \; b_j \wedge \neg c_{i, j} = 0. \]
For $j \in J$, let $c_j^{*} = \mathcal{C}^{\geq b_j}\!\! \downarrow$. Then for every $i \in I$ and $j \in J$ we have that $c_{i, j} = c_j^{*}$. And so for every $i \in I$ and $j \in J$ we have that
	\[ \neg c_j^{*} = \neg c_{i, j} \geq a_i \; \text{ and } c_j^{*} = c_{i, j} \geq b_j. \]
Define $c = (\bigvee_{j \in J} c_j^{*})^{\mathcal{C}}$. Notice that $c$ exists, and also 
	\[(\bigvee_{j \in J} c_j^{*})^{\mathcal{B}} = c = (\bigvee_{j \in J} c_j^{*})^{\mathcal{A}} \]
because by hypothesis $\mathcal{C} \leq^* \mathcal{A}, \mathcal{B}$. We claim that i)$\,c \geq b$ and ii)$\,\neg c \geq a$. To see i), notice that for every $j \in J$ we have that $c_j^* \geq b_j$ and so $c = \bigvee_{j \in J} c_j^* \geq \bigvee_{j \in J} b_j = b$. To see ii), notice that $\neg c = \bigwedge_{j \in J} \neg c_j^*$ and that $\neg c_j^* \geq a$ for every $j \in J$, because for every $i \in I$ we have that $\neg c_j^* \geq a_i$ and $a = \bigvee_{i \in I} a_i$.	
\end{proof}

	Finally, we can state our improved characterization of $\mathcal{A} \pureindep[\mathcal{C}] \mathcal{B}$. 

	\begin{corollary}\label{cor_for_indep_tuples} Let $\mathcal{D} \models \mathrm{BA}$, $\mathcal{A}, \mathcal{B} \leq \mathcal{D}$ and $\mathcal{C} \leq^* \mathcal{A}, \mathcal{B}$, with $\mathcal{A}, \mathcal{B}$ and $\mathcal{C}$ atomic. Then		
		\[ \mathcal{A} \pureindep[\mathcal{C}] \mathcal{B} \;\; \Leftrightarrow\;\; \forall a \in \mathrm{At}(\mathcal{A}) \text{ and } b \in \mathrm{At}(\mathcal{B}) \, \bigg[ \; a \leq \mathcal{C}^{\geq b} \!\! \downarrow \; \Rightarrow a \wedge b \neq 0 \; \bigg]. \]	
\end{corollary}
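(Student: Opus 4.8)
The plan is to obtain the corollary as a direct reformulation of Lemma~\ref{char_indep_with_nice_alg}, the only real step being the elimination of the existential quantifier over atoms of $\mathcal{C}$ that appears on the right-hand side of that lemma. Fix $a \in \mathrm{At}(\mathcal{A})$ and $b \in \mathrm{At}(\mathcal{B})$. Since $\mathcal{C} \leq^* \mathcal{B}$, the subalgebra $\mathcal{C}$ is atomic and regular in $\mathcal{B}$, so Lemma~\ref{unicity_of_atom} applies (with $\mathcal{B}$ in place of $\mathcal{A}$) and tells us that $\mathcal{C}^{\geq b} \!\! \downarrow$ is the \emph{unique} atom of $\mathcal{C}$ lying above $b$. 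Consequently, for $c \in \mathrm{At}(\mathcal{C})$ the condition $b \leq c$ holds if and only if $c = \mathcal{C}^{\geq b} \!\! \downarrow$, and hence the clause ``$\exists c \in \mathrm{At}(\mathcal{C}) \, [\, a \leq \neg c \text{ and } b \leq c \,]$'' occurring in Lemma~\ref{char_indep_with_nice_alg} is equivalent to the single quantifier-free statement $a \leq \neg (\mathcal{C}^{\geq b} \!\! \downarrow)$.

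Next I would translate the two inequalities into the meet-language used in the corollary, invoking only elementary facts about atoms. For any element $x$ of a Boolean algebra, $a \leq \neg x \Leftrightarrow a \wedge x = 0$, and since $a$ is an atom we additionally have $a \wedge x = 0 \Leftrightarrow a \not\leq x$. Applying this with $x = b$ rewrites the hypothesis ``$a \leq \neg b$'' of Lemma~\ref{char_indep_with_nice_alg} as ``$a \wedge b = 0$'', and applying it with $x = \mathcal{C}^{\geq b} \!\! \downarrow$ rewrites the (now quantifier-free) conclusion $a \leq \neg(\mathcal{C}^{\geq b} \!\! \downarrow)$ as ``$a \not\leq \mathcal{C}^{\geq b} \!\! \downarrow$''. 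Thus the right-hand side of Lemma~\ref{char_indep_with_nice_alg} is equivalent to: for all $a \in \mathrm{At}(\mathcal{A})$ and $b \in \mathrm{At}(\mathcal{B})$, if $a \wedge b = 0$ then $a \not\leq \mathcal{C}^{\geq b} \!\! \downarrow$.

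Finally, taking the contrapositive of the inner implication yields precisely ``$a \leq \mathcal{C}^{\geq b} \!\! \downarrow \Rightarrow a \wedge b \neq 0$'', which is the right-hand side of the corollary; chaining these equivalences with Lemma~\ref{char_indep_with_nice_alg} gives the stated characterization. I do not expect any genuine obstacle here: the argument is a mechanical unwinding whose only substantive input is the uniqueness provided by Lemma~\ref{unicity_of_atom}, which is exactly what permits the existential quantifier over $\mathrm{At}(\mathcal{C})$ to be contracted to the single value $\mathcal{C}^{\geq b} \!\! \downarrow$.
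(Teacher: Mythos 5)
Your proof is correct and follows essentially the same route as the paper: both reduce the corollary to Lemma~\ref{char_indep_with_nice_alg}, collapse the existential quantifier over $\mathrm{At}(\mathcal{C})$ to the single candidate $\mathcal{C}^{\geq b}\!\!\downarrow$ using the uniqueness from Lemma~\ref{unicity_of_atom}, and then translate via the atom dichotomy ($a \wedge c = 0$ iff $a \nleq c$, applied with $c = \mathcal{C}^{\geq b}\!\!\downarrow \in C \subseteq A$) followed by a contrapositive. The paper simply writes this out as two separately verified implications rather than as your single chain of equivalences.
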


	\begin{proof} 
	For the direction $(\Leftarrow)$, let $a \in \mathrm{At}(\mathcal{A})$, $b \in \mathrm{At}(\mathcal{B})$ and $c = \mathcal{C}^{\geq b} \!\! \downarrow$, and suppose that $a \leq \neg b$, i.e. $a \wedge b = 0$. Then, by assumption, we have that $a \nleq c$, and so $a \wedge c = 0$, because $a \in \mathrm{At}(\mathcal{A})$ and $c \in C \subseteq A$. Thus, $b \leq c = \mathcal{C}^{\geq b} \!\! \downarrow$ and $a \leq \neg c$. Hence, by Lemma \ref{char_indep_with_nice_alg} we have that $\mathcal{A} \pureindep[\mathcal{C}] \mathcal{B}$.

	For the direction $(\Rightarrow)$, let again $a \in \mathrm{At}(\mathcal{A})$ and $b \in \mathrm{At}(\mathcal{B})$, and suppose that $a \wedge b = 0$. Then, by assumption, there exists $c \in \mathrm{At}(\mathcal{C})$ such that $a \wedge c = 0$ and $b \wedge \neg c = 0$. But if this is the case, then $b \leq c$ and so $c = \mathcal{C}^{\geq b} \!\! \downarrow$. Thus, $a \nleq \mathcal{C}^{\geq b} \!\! \downarrow$ because $a \wedge c = 0$ and $a \in \mathrm{At}(\mathcal{A})$.
		
\end{proof}

\begin{remark} Notice that in Lemma~\ref{char_indep_with_nice_alg} the assumption $\mathcal{C} \leq^* \mathcal{A}, \mathcal{B}$ is necessary. To see this, consider the following subalgebras of $\mathcal{P}(\omega)$
	\[ \mathcal{A} = \overline{\langle \left\{ \left\{ 2n, 2n + 2 \right\}\, | \, n \in 2\omega \right\} \cup \langle \left\{ \left\{ 2n + 1 \right\} \, | \, n \in \omega \right\}  \rangle}, \]
	\[ \mathcal{C} = \langle \left\{ \left\{ 2n + 1, 2n + 3 \right\}\, | \, n \in 2\omega \right\} \cup \langle \left\{ \left\{ 2n, 2n + 2 \right\} \, | \, n \in 2\omega \right\}  \rangle, \]
	\[ \mathcal{B} = \overline{\langle \left\{ \left\{ 2n + 1, 2n + 3 \right\}\, | \, n \in 2\omega \right\} \cup \langle \left\{ \left\{ 2n \right\} \, | \, n \in \omega \right\}  \rangle}. \]
where for $\mathcal{D} \models \mathrm{BA}$ with $\mathcal{D}$ complete and $\mathcal{E} \leq \mathcal{D}$ we denote by $\overline{\mathcal{E}}$ the completion of $\mathcal{E}$ in $\mathcal{D}$, and, as usual with fields of sets, we identify structures with their domains. Then $\mathcal{C} \leq \mathcal{A}, \mathcal{B}$ and  for all $a \in \mathrm{At}(\mathcal{A})$ and $b \in \mathrm{At}(\mathcal{B})$ whenever $a \cap b = \emptyset$ there is $c \in \mathrm{At}(\mathcal{C})$ such that $a \cap c = \emptyset$ and $b \cap \neg c = \emptyset$. But $\mathcal{A} \not\!\pureindep[\mathcal{C}] \mathcal{B}$ because for 
	\[ a = \left\{ 2n \, | \, n \in \omega \right\} \text{ and } b = \left\{ 2n + 1 \, | \, n \in \omega \right\}\]
we have that $a \cap b = \emptyset$, but for every $c \in \mathcal{C}$ if $b \subseteq c$ then $a \cap c \neq \emptyset$. \end{remark}

	We conclude this section with a basic lemma about atomless Boolean algebras which will be relevant in the proof of the main theorem.

	\begin{lemma}\label{splitting_atomless} Let $\mathcal{A} \models \mathrm{ABA}$. Then for every $a \in A$ with $a \neq 0$ there exists $(p_n(a))_{n < \omega} \in A^{\omega}$ such that for every $k < \omega$ the following holds:
		\begin{enumerate}[i)]
			\item $p_k(a) \neq 0$;
			\item $p_k(a) \wedge \bigvee_{i < k} p_i(a) = 0$;
			\item $\bigvee_{i \leq k}p_i(a) \lneq a$.
	\end{enumerate} 		
\end{lemma}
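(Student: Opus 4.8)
The plan is to construct the sequence $(p_n(a))_{n<\omega}$ recursively, at each stage peeling off a nonzero piece of $a$ while keeping a nonzero ``reservoir'' left over, using atomlessness to guarantee there is always room to do this. Concretely, I would maintain as an invariant that after stage $k$ we have produced pairwise disjoint nonzero elements $p_0(a),\dots,p_k(a)$ all below $a$, together with a nonzero element $r_k := a \wedge \neg\bigvee_{i\le k} p_i(a)$, the ``remainder''. The point of carrying $r_k$ explicitly is that it makes conditions ii) and iii) automatic: disjointness of the $p_i(a)$ is built in, and $\bigvee_{i\le k} p_i(a) = a \wedge \neg r_k \lneq a$ precisely because $r_k \neq 0$.

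For the base case, since $\mathcal{A} \models \mathrm{ABA}$ and $a \neq 0$, the element $a$ is not an atom, so there is $b$ with $0 < b < a$; set $p_0(a) = b$ and $r_0 = a \wedge \neg b$, which is nonzero since $b < a$. For the recursive step, suppose $p_0(a),\dots,p_k(a)$ and $r_k \neq 0$ have been defined. Again $r_k$ is not an atom (no atoms in $\mathcal{A}$), so pick $d$ with $0 < d < r_k$; put $p_{k+1}(a) = d$ and $r_{k+1} = r_k \wedge \neg d \neq 0$. Then $p_{k+1}(a) \le r_k \le a$, and $p_{k+1}(a) \wedge \bigvee_{i\le k} p_i(a) \le r_k \wedge \neg r_k = 0$ wait—more carefully, $p_{k+1}(a) \le r_k = a \wedge \neg\bigvee_{i\le k}p_i(a)$, so $p_{k+1}(a) \wedge \bigvee_{i\le k}p_i(a) = 0$, which is condition ii). Condition i) holds by construction, and for iii) note $\bigvee_{i\le k+1}p_i(a) = \bigvee_{i\le k}p_i(a) \vee d = a \wedge \neg r_{k+1} \lneq a$ since $r_{k+1}\neq 0$; the identity $\bigvee_{i\le k}p_i(a) \vee d = a \wedge \neg r_{k+1}$ follows from a routine Boolean computation using $r_k = a \wedge \neg\bigvee_{i\le k}p_i(a)$ and $\bigvee_{i\le k}p_i(a) \le a$.

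I do not anticipate a genuine obstacle here: the only nontrivial ingredient is that atomlessness lets us split any nonzero element into two nonzero pieces, and everything else is bookkeeping with finite sups and complements, all of which are honest elements of $\mathcal{A}$ (no infinitary operations are needed, so no regularity or completeness issues arise). The one point to be slightly careful about is making sure the remainder $r_k$ stays nonzero at every stage, which is exactly why I take a \emph{proper} nonzero part $d < r_k$ rather than all of $r_k$ at each step; carrying $r_k$ as an explicit auxiliary sequence in the recursion keeps this transparent.
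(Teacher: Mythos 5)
Your proof is correct. The paper in fact states this lemma without proof, treating it as a basic fact about atomless Boolean algebras, and your recursive construction --- peeling off a proper nonzero piece of the remainder $r_k = a \wedge \neg\bigvee_{i\le k}p_i(a)$ at each stage so that $r_k$ stays nonzero --- is exactly the standard argument one would supply; all operations are finite, so there are no regularity or completeness issues. The only cosmetic point is to clean up the mid-sentence self-correction (``wait---more carefully'') before writing it up, since the corrected computation $p_{k+1}(a)\le r_k = a\wedge\neg\bigvee_{i\le k}p_i(a)$ is the one that matters and it is right.
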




\subsection{Main Theorem}\label{main_theorem}

	The various lemmas proved in the previous section allowed us to characterize the relation $\mathcal{A} \pureindep[\mathcal{C}] \mathcal{B}$ under the assumption of atomicity of $\mathcal{C}$ and that it sits nicely in $\mathcal{A}$ and $\mathcal{B}$. We now use this characterization to establish a reduction of the relation $\mathcal{A} \pureindep[\mathcal{C}] \mathcal{B}$ to dividing in $\mathrm{ABA}$. This is the content of the following theorem.

	\begin{theorem}[Main Theorem]\label{char_div} Let $\mathfrak{M}$ be the monster model of $\mathrm{ABA}$, and $\mathcal{A}, \mathcal{B}$, $\mathcal{C} \leq \mathfrak{M}$, with $\mathcal{C}$ atomic and $\mathcal{C} \leq^* \mathcal{A}, \mathcal{B}$. Then
		
		\[ \mathcal{A} \pureindep[\mathcal{C}] \mathcal{B} \;\; \Leftrightarrow \;\; \forall a \in A \text{ and } b \in B \;\; \bigg[ \; a \displaydivindep[\mathcal{C}] b \; \bigg]. \]	
\end{theorem}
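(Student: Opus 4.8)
The plan is to prove the two directions separately, reducing in each case to the pointwise characterization of $\pureindep[\mathcal{C}]$ given by Lemma~\ref{referee_lemma}. Recall that by quantifier elimination for $\mathrm{ABA}$, the type of a finite tuple over a set $C$ is determined by the quantifier-free type, hence by the (finite) subalgebra it generates together with its embedding into $\langle C\rangle$; this is the basic tool that makes indiscernible sequences tractable here.

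For the direction $(\Leftarrow)$, I would argue by contraposition: assume $\mathcal{A}\not\!\pureindep[\mathcal{C}]\mathcal{B}$ and produce $a\in A$, $b\in B$ with $a\not\!\divindep[\mathcal{C}]b$. By Lemma~\ref{referee_lemma} there is an atom $c\in\mathrm{At}(\mathcal{C})$ and elements $a\in A$, $b\in B$ with $a,b\le c$, $a,b\notin\{0,c\}$, and (say) $a\le b$. The idea is to build a sequence $(b_i)_{i<\omega}$, $C$-indiscernible, $b_0=b$, with the property that no single element can be below all the $b_i$'s while remaining above $a$; the obstruction is that $c$ is an atom of $\mathcal{C}$, so no genuinely intermediate element of $\mathfrak M$ of the form needed to realize $\mathrm{tp}(a/Cb)$ can be squeezed between $a$ and $\bigwedge_i b_i$ when the $b_i$ shrink. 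Concretely: inside the interval $[0,c]$ of $\mathfrak M$, which is itself an atomless Boolean algebra over which $\mathcal{C}$ contributes nothing beyond $0$ and $c$, choose $b=b_0>b_1>b_2>\cdots$ a strictly decreasing sequence with $\bigwedge_i b_i=0$ (possible by atomlessness), arranged to be $C$-indiscernible — automorphisms of $\mathfrak M$ fixing $C$ act transitively enough on such "generic descending chains inside an atom of $\mathcal C$", which one checks via QE. Since $\mathrm{tp}(a/Cb)$ requires $0<a'\le b$ and $a'\le c$ with $a'$ not $0,c$ and, crucially for indiscernibility over $Ca'$, $a'$ would have to sit in a fixed relation to every $b_i$; but $a'\le b_i$ for all $i$ forces $a'\le\bigwedge_i b_i=0$, contradiction. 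Hence clause (ii) in the definition of $\divindep$ fails, so $a\not\!\divindep[\mathcal{C}]b$. The case $b\le a$ is handled symmetrically by passing to complements, exactly as in the proof of Lemma~\ref{referee_lemma}.

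For the direction $(\Rightarrow)$, assume $\mathcal{A}\pureindep[\mathcal{C}]\mathcal{B}$ and fix $a\in A$, $b\in B$; I must show $a\divindep[\mathcal{C}]b$. Let $(b_i)_{i<\omega}$ be $C$-indiscernible with $b_0=b$. I want $a'$ with $\mathrm{tp}(a'/Cb)=\mathrm{tp}(a/Cb)$ and $(b_i)_i$ indiscernible over $Ca'$. By QE it suffices to control the quantifier-free type of $a'$ over $C\cup\{b_i:i<\omega\}$; the relevant data is the Boolean "position" of $a'$ relative to each $b_i$ and relative to the atoms of $\mathcal C$ that are involved. Using Lemma~\ref{referee_lemma} (the independence of $\mathcal A$ and $\mathcal B$ over $\mathcal C$), for each atom $c$ of $\mathcal C$ the parts $a\wedge c$ and $b\wedge c$ are incomparable unless one of them is $0$ or $c$; the plan is to realize $a'$ atom-of-$\mathcal C$ by atom-of-$\mathcal C$, putting $a'\wedge c$ in "generic position" (relative to the interval $[0,c]$) with respect to the whole sequence $(b_i\wedge c)_i$ simultaneously — which is possible precisely because inside each such atom we are in an atomless Boolean algebra and the $b_i\wedge c$ form an indiscernible sequence there, and the constraint coming from $\mathrm{tp}(a/Cb)$ only pins down the relation of $a'\wedge c$ to $b_0\wedge c$, a relation that can be spread consistently over all $i$ by indiscernibility of the $b_i$. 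Assembling these local choices $a'=\bigvee_{c}(a'\wedge c)$ (taken with respect to $\mathcal C$'s atoms, using that $\mathcal C\leq^*\mathcal A,\mathcal B$ so the relevant sups behave) gives an element of $\mathfrak M$ with the right type over $Cb$ and over which $(b_i)_i$ is indiscernible.

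The main obstacle I anticipate is the $(\Rightarrow)$ direction, specifically the simultaneous realization step: one must produce a single $a'$ whose qf-type over the infinite set $C\cup\{b_i\}$ is consistent, agrees with $a$'s type over $Cb_0$, and is "symmetric" enough in the $b_i$ to guarantee indiscernibility over $Ca'$. Carrying this out cleanly requires (a) reducing the infinitely many Boolean constraints to the finite data that QE actually sees — i.e., identifying which atoms of the finite subalgebra generated by a finite subtuple of $(b_i)$ over $C$ are nonzero — and (b) verifying that $\mathcal{C}\leq^*\mathcal{A},\mathcal{B}$ and the atomicity of $\mathcal C$ let one patch the per-atom choices into a global element without creating new collapses. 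This is where the hypotheses $\mathcal C$ atomic and $\mathcal C\leq^*\mathcal A,\mathcal B$ do the real work, and I expect the bookkeeping with sups taken "with respect to $\mathcal C$" versus "with respect to $\mathfrak M$" to be the fiddliest part.
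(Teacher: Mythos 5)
Your right-to-left direction contains a genuine error. You propose to witness dividing by a strictly decreasing $C$-indiscernible chain $b=b_0>b_1>b_2>\cdots$ inside the atom $c$ with $\bigwedge_i b_i=0$, and to derive a contradiction from ``$a'\le b_i$ for all $i$ forces $a'\le\bigwedge_i b_i=0$''. This cannot be carried out in $\mathfrak{M}$: the monster model is highly saturated, so for any countable strictly decreasing chain of nonzero elements the partial type $\left\{ 0<x\le b_i \, : \, i<\omega \right\}$ is finitely satisfiable and hence realized; in particular no such chain has $0$ as its only lower bound, so a chain with $\bigwedge_i b_i=0$ does not exist (``possible by atomlessness'' is true in small atomless algebras, not in a saturated one), and the claimed obstruction evaporates. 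Worse, whenever $a<b$ strictly (a case you cannot rule out), such a chain genuinely fails to witness dividing: by saturation choose $a'$ with $0<a'<b_i$ for every $i$; since all these elements lie below the atom $c$ of $\mathcal{C}$, quantifier elimination gives $\mathrm{tp}(a'/Cb)=\mathrm{tp}(a/Cb)$, and the quantifier-free type over $Ca'$ of every increasing tuple from $(b_i)_{i<\omega}$ is the same (each cell $c\wedge\neg b_{i_1}$, $b_{i_j}\wedge\neg b_{i_{j+1}}$, $b_{i_n}\wedge\neg a'$, $a'$ is nonzero and the pattern is independent of the indices), so $(b_i)_{i<\omega}$ stays indiscernible over $Ca'$. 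The paper avoids this by taking, via Lemma~\ref{splitting_atomless} and quantifier elimination, a $C$-indiscernible sequence with $b_0=b$ and $b_i\wedge b_j=0$ for $i<j$, all below $c$, after arranging $a\le b$ (replacing $a,b$ by $c\wedge\neg a$, $c\wedge\neg b$ if needed). Then the formula $x\le b_0\wedge x\ne 0$ in $\mathrm{tp}(a/Cb)$ is incompatible with indiscernibility over $Ca'$: $a'\wedge b_0=a'\ne 0$ would force $a'\wedge b_1\ne 0$, while $a'\le b_0$ and $b_0\wedge b_1=0$ force $a'\wedge b_1=0$. Pairwise disjointness, not a shrinking chain, is what makes the obstruction survive saturation.

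Your left-to-right direction is, in outline, the paper's argument: by Lemma~\ref{referee_lemma}, on each atom $c$ of $\mathcal{C}$ either $a\wedge c$ or $b\wedge c$ is in $\left\{0,c\right\}$ or the two are incomparable; one chooses $a'\wedge c$ in generic position with respect to the whole sequence $(b_i\wedge c)_{i<\omega}$ (using Lemma~\ref{splitting_atomless} and saturation), patches these choices into a single $a'$ by saturation, and checks by quantifier elimination that $\mathrm{tp}(a'/Cb)=\mathrm{tp}(a/Cb)$ and that $(b_i)_{i<\omega}$ is indiscernible over $Ca'$. As written, though, this is only a plan: the ``simultaneous realization'' step you flag as the main obstacle is exactly what the per-atom construction discharges, and you would still need to spell it out. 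The direction that needs actual repair, however, is the other one.
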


	\begin{proof} For the direction $(\Leftarrow)$, suppose that $a, b, c$ are a witness for $\mathcal{A} \not\!\pureindep[\mathcal{C}] \mathcal{B}$ as in Lemma~\ref{referee_lemma}. Possibly replacing $a$ with $c \wedge \neg a$ and $b$ with $c \wedge \neg b$, we may assume that $a \leq b$. Since $b < c$, using Lemma~\ref{splitting_atomless}, we can find $(b_i)_{i < \omega}$ indiscernible over $C$ such that $b_0 = b$ and $b_i \wedge b_j = 0$ for every $i < j < \omega$, witnessing that $\mathrm{tp}(a/Cb)$ divides over $C$.
	
	For the direction $(\Rightarrow)$, suppose that $\mathcal{A} \pureindep[\mathcal{C}] \mathcal{B}$ and let $a \in A$, $b \in B$ and $(b_i)_{i < \omega}$ indiscernible over $C$ such that $b_0 = b$. Let $c \in \mathrm{At}(\mathcal{C})$. If $a \wedge c \in \left\{ 0, c \right\}$ or $b \wedge c \in \left\{ 0, c \right\}$, let $a_c = a \wedge c$. Otherwise, $a \wedge c$ and $b \wedge c$ are incomparable. Choose $a_c \leq c$ so that $a_c \wedge b_i \neq 0$ but $a_c \lneq b$ for every $i < \omega$, this is possible because of Lemma~\ref{splitting_atomless} and the saturation of the monster model with respect to small sets. Thus, we have that $\mathrm{tp}(a_c/Cb_i) = \mathrm{tp}(a \wedge c/Cb)$ for every $i < \omega$. Again, by saturation, let $a'$ be such that $a' \wedge c = a_c$ for every $c \in \mathrm{At}(\mathcal{C})$. By quantifier elimination in $\mathrm{ABA}$ we have that $\mathrm{tp}(a'/Cb_i) = \mathrm{tp}(a/Cb)$ and $(b_i)_{i < \omega}$ is indiscernible over $Ca'$.
	
\end{proof}

	We then saw that under the assumptions $\mathcal{C} \leq^* \mathcal{A}, \mathcal{B}$ and $\mathcal{C}$ atomic, the relation $\mathcal{A} \pureindep[\mathcal{C}] \mathcal{B}$ is reducible to dividing in $\mathrm{ABA}$. At this point one may wonder: why $\mathrm{ABA}$? Can we reduce this relation to dividing in another completion of $\mathrm{BA}$? The first theory that comes to mind is the theory of infinite atomic Boolean algebras ($\mathrm{ATBA}$), which is also complete. The following remark shows that the reduction fails when dividing is considered in $\mathrm{ATBA}$.

	\begin{remark}\label{remark_on_atom} Let $\mathfrak{M}$ be the monster model of $\mathrm{ATBA}$, $(c, d, e, f) \in \mathrm{At}(\mathfrak{M})^4$ injective, $a = c \vee d$, $b = c \vee e$, $\mathcal{A} = \langle a \rangle$, $\mathfrak{B} = \langle b \rangle$ and $2 = \langle \emptyset \rangle$. Then $\mathcal{A} \pureindep[2] \mathcal{B}$ because $a \wedge b = c, a \wedge \neg b = d, \neg a \wedge b = e$ and $f \leq \neg a \wedge \neg b$. On the other hand, $a \not\!\divindep[2] b$. Let indeed $(p_i)_{i < \omega} \in (\mathrm{At}(\mathfrak{M}) - \left\{ c, e \right\})^{\omega}$ injective and define $b_0 = b$ and $b_i = p_{2i} \vee p_{2i +1}$ for $0 < i < \omega$. Then $(b_i)_{i < \omega}$ is indiscernible over $\emptyset$. Let now $a' \in \mathfrak{M}$ with $\mathrm{tp}(a'/b) = \mathrm{tp}(a/b)$, then $a' \wedge b \neq 0$ and $a' = q_0 \vee q_1$ for $q_0, q_1 \in \mathrm{At}(\mathfrak{M})$. Let now $0 < i < \omega$ be such that $p_{2i} \neq q_0, q_1$ and $p_{2i +1} \neq q_0, q_1$, then $a' \wedge b_i = 0$. Thus $(b_i)_{i < \omega}$ is not indiscernible over $a'$.
	
\end{remark}

\subsection{Reduction}\label{sec_on_red} 

	In this section we first introduce the notion of {\em algebra generated by a tuple of functions} and study some of its elementary properties. We then define an independence relation between tuples of functions, which is just an abstraction of the way independence is defined in database theory. Finally, we show that this form of independence is reducible to the independence relation between Boolean algebras that we dealt with in the previous sections.

	As usual, by field of sets on $M$ we mean a Boolean algebra of subsets of $M$.

	\begin{definition} Let $M$ be a set and $\mathcal{F}$ a field of sets on $M$. We define the $n$-th product of $\mathcal{F}$, in symbols $(\mathcal{F})^n$, to be the field of sets on $M^n$ generated by the family of sets $\left\{ \prod_{i < n} A_i \, | \, (A_i)_{i < n} \in \mathcal{F}^n \right\}$.
\end{definition}

	\begin{proposition} Let $M$ be a set and $\mathcal{F}$ a field of sets on $M$. Then
		\[ (\mathcal{F})^n = \mbox{\Large $\{$ } \!\! \bigcup_{i < k} (\prod_{j < n} A_{i, j}) \, | \, (A_{i, j})_{\substack{i < k \\ j < n} } \in \mathcal{F}^{nk}, k < \omega \mbox{\Large $\}$}.\]
\end{proposition}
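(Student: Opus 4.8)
\textit{Proof proposal.} The plan is to write $\mathcal{R}$ for the set on the right-hand side and to prove $\mathcal{R} = (\mathcal{F})^n$ by double inclusion. The inclusion $\mathcal{R} \subseteq (\mathcal{F})^n$ is immediate: each product $\prod_{j<n} A_{i,j}$ with all $A_{i,j} \in \mathcal{F}$ is one of the generators of $(\mathcal{F})^n$, and $(\mathcal{F})^n$, being a field of sets, is closed under finite unions, so every element of $\mathcal{R}$ lies in $(\mathcal{F})^n$. For the reverse inclusion, note that every generator $\prod_{j<n} A_j$ of $(\mathcal{F})^n$ already belongs to $\mathcal{R}$ (take $k=1$); hence it suffices to show that $\mathcal{R}$ is itself a field of sets on $M^n$, for then $\mathcal{R}$ contains the field of sets generated by those generators, which is exactly $(\mathcal{F})^n$.

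So the bulk of the work is checking that $\mathcal{R}$ is closed under the Boolean operations. It contains $M^n = \prod_{j<n} M$ and $\emptyset = \prod_{j<n} \emptyset$, since $M, \emptyset \in \mathcal{F}$. Closure under finite unions is built into the definition of $\mathcal{R}$. For intersections, I would first record the identity
\[ \Big(\prod_{j<n} A_j\Big) \cap \Big(\prod_{j<n} B_j\Big) = \prod_{j<n} (A_j \cap B_j), \]
whose right-hand side is in $\mathcal{R}$ because $\mathcal{F}$ is closed under intersection; then a routine distributivity computation rewrites the intersection of two finite unions of products as a finite union of such pairwise intersections, hence as an element of $\mathcal{R}$.

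The only mildly delicate point is closure under complementation, and I would handle it with the formula
\[ M^n \setminus \prod_{j<n} A_j = \bigcup_{j<n} \Big( \prod_{l<j} M \Big) \times (M \setminus A_j) \times \Big( \prod_{j<l<n} M \Big), \]
whose right-hand side is a finite union of products of members of $\mathcal{F}$ (using $M \in \mathcal{F}$ and $M \setminus A_j \in \mathcal{F}$), hence an element of $\mathcal{R}$. For a general element $\bigcup_{i<k} \prod_{j<n} A_{i,j}$ of $\mathcal{R}$, De Morgan turns its complement into $\bigcap_{i<k} \big( M^n \setminus \prod_{j<n} A_{i,j} \big)$, a finite intersection of elements of $\mathcal{R}$, which lies in $\mathcal{R}$ by the intersection-closure established above. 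This completes the verification that $\mathcal{R}$ is a field of sets, and with it the proof. I do not expect a genuine obstacle here: the argument is induction-free bookkeeping with Boolean identities, and the one place to be careful is stating the complement-of-a-product formula correctly and then invoking intersection-closure in the right order when dualizing.
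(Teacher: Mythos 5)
Your argument is correct: the paper states this proposition without proof, treating it as routine, and your verification is exactly the standard one that would be supplied — show the right-hand side contains the generating rectangles and is itself a field of sets, using $(\prod_j A_j)\cap(\prod_j B_j)=\prod_j(A_j\cap B_j)$, distributivity, the complement-of-a-rectangle formula $M^n\setminus\prod_{j<n}A_j=\bigcup_{j<n}M^{j}\times(M\setminus A_j)\times M^{n-j-1}$, and De Morgan. No gaps; the one delicate point (complementation) is handled correctly.
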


	As known, given two sets $I$ and $M$, a field of sets on $M$ and a function $f: I \rightarrow M$, the family of sets $\mbox{\Large $\{$ } \!\! f^{(-1)}(A)\, | \, A \in \mathcal{F} \mbox{\Large $\}$}$ is a field of sets on $I$. In the following, given a tuple of functions $(f_i)_{i < n}$ from $I$ to $M$, we will identify the tuple $(f_i)_{i < n}$ with the function $f: I \rightarrow M^n$ such that $f(j) = (f_0(j), ..., f_{n-1}(j))$ for every $j \in I$.

	\begin{definition} Let $f = (f_i)_{i < n}$ be a tuple of functions from $I$ to $M$, and $\mathcal{F}$ the finite-cofinite field of sets on $M$. We let the Boolean algebra generated by $f$, in symbols $\pi(f)$, to be the field of sets $\mbox{\Large $\{$ } \!\! f^{(-1)}(A)\, | \, A \in (\mathcal{F})^n \mbox{\Large $\}$} \subseteq \mathcal{P}(I)$.		
\end{definition}

	\begin{proposition} Let $f = (f_i)_{i < n}$ be a tuple of functions from $I$ to $M$. Then
		\begin{enumerate}[i)]
			\item $\pi(f) = \langle \pi(f_0) \cup \cdots \cup \pi(f_{n-1}) \rangle = \langle f_i^{(-1)}(a) \, | \, i < n, a \in \mathrm{ran} (f_i) \rangle$;
			\item $\mathrm{At}(f) = \mbox{\Large $\{$ } \!\! f^{(-1)}(a) \, | \, a \in \mathrm{ran}(f) \mbox{\Large $\}$}$;
			\item $\pi(f)$ is atomic.
	\end{enumerate}
\end{proposition}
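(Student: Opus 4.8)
The plan is to reduce everything to one observation: for any function $g\colon I\to N$ the preimage operation $B\mapsto g^{(-1)}(B)$ is a Boolean homomorphism $\mathcal{P}(N)\to\mathcal{P}(I)$, and Boolean homomorphisms commute both with finite Boolean operations and with the operation "take the generated subalgebra". Applying this to $f\colon I\to M^n$ and to the subalgebra $(\mathcal{F})^n\leq\mathcal{P}(M^n)$, I would first record that $\pi(f)$ is exactly the subalgebra of $\mathcal{P}(I)$ generated by the $f^{(-1)}$-images of any generating set of $(\mathcal{F})^n$.

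For (i): I would take as generators of $(\mathcal{F})^n$ the products $\prod_{i<n}A_i$ with $A_i\in\mathcal{F}$, observe that $f^{(-1)}(\prod_{i<n}A_i)=\bigcap_{i<n}f_i^{(-1)}(A_i)$, and note that each $f_i^{(-1)}(A_i)$ lies in $\pi(f_i)$ while, conversely, every element of $\pi(f_i)$ arises this way (take all the other coordinates equal to $M$, which is cofinite hence in $\mathcal{F}$). This identifies $\pi(f)$ with $\langle\pi(f_0)\cup\cdots\cup\pi(f_{n-1})\rangle$. For the second equality it suffices to show $\pi(f_i)=\langle f_i^{(-1)}(a)\mid a\in\mathrm{ran}(f_i)\rangle$ for each $i$: writing $A\in\mathcal{F}$ as finite or cofinite, $f_i^{(-1)}(A)$ is either a finite union of the sets $f_i^{(-1)}(a)$ for $a\in A\cap\mathrm{ran}(f_i)$ or the complement of such a union, and conversely each $f_i^{(-1)}(a)=f_i^{(-1)}(\{a\})$ lies in $\pi(f_i)$ because $\{a\}\in\mathcal{F}$. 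Joining over $i<n$ then gives the claim.

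For (ii): I would first check that for $a\in\mathrm{ran}(f)\subseteq M^n$ the fiber $f^{(-1)}(a)$ is a nonzero member of $\pi(f)$ — nonzero by the choice of $a$, and a member of $\pi(f)$ because $\{a\}=\prod_{i<n}\{a_i\}\in(\mathcal{F})^n$. Then I would show it is an atom: any nonzero $X=f^{(-1)}(A)\leq f^{(-1)}(a)$ with $A\in(\mathcal{F})^n$ contains some $j$ with $f(j)=a$, which forces $a\in A$ and hence $f^{(-1)}(a)\subseteq f^{(-1)}(A)=X$. For the remaining inclusion I would take an arbitrary atom $X$ of $\pi(f)$, pick $j\in X$, set $a=f(j)\in\mathrm{ran}(f)$, note that $f^{(-1)}(a)\in\pi(f)$ meets $X$ in $j$ so $X\wedge f^{(-1)}(a)\neq 0$, invoke the proposition characterizing atoms to conclude $X\leq f^{(-1)}(a)$, and finally get $X=f^{(-1)}(a)$ since $f^{(-1)}(a)$ is itself an atom and $X\neq 0$.

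For (iii): using the proposition characterizing atomic Boolean algebras, it suffices to verify that the unit $I$ of $\pi(f)$ is the supremum, computed inside $\pi(f)$, of the set of atoms produced in (ii). Set-theoretically $\bigcup_{a\in\mathrm{ran}(f)}f^{(-1)}(a)=I$, since every $j\in I$ lies in $f^{(-1)}(f(j))$; hence any $Y\in\pi(f)$ dominating all these fibers contains this union and so equals $I$, which shows $I$ is their least upper bound in $\pi(f)$ and therefore that $\pi(f)$ is atomic. The only point that needs care — and the closest thing to an obstacle — is keeping the suprema taken inside the subalgebra $\pi(f)$ distinct from set-theoretic unions in $\mathcal{P}(I)$ in parts (i) and (iii); everything else is routine bookkeeping with preimages.
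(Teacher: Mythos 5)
Your proof is correct; the paper states this proposition without proof, treating it as a routine fact, and your argument (the preimage map as a Boolean homomorphism commuting with generated subalgebras, the fibers $f^{(-1)}(a)$ as the atoms, and the unit being the supremum of these fibers inside $\pi(f)$) is exactly the standard verification intended, including the appropriate care about where suprema are computed.
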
 

	\begin{lemma} Let $f = (f_j)_{j < n}$ be a tuple of functions from $I$ to $M$. Then $\pi(f)$ is a regular subalgebra of $\mathcal{P}(I)$.
		
\end{lemma}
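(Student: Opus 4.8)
The plan is to combine the description of $\mathrm{At}(\pi(f))$ from the preceding proposition with the completeness of $\mathcal{P}(I)$. Since $\mathcal{P}(I)$ is a complete Boolean algebra, every family $(c_k)_{k\in K}\in\pi(f)^{K}$ has a supremum in $\mathcal{P}(I)$, namely $(\bigvee_{k\in K}c_k)^{\mathcal{P}(I)}=\bigcup_{k\in K}c_k$. Hence to prove regularity it suffices to show: whenever $(c_k)_{k\in K}$ is a family in $\pi(f)$ for which $c:=(\bigvee_{k\in K}c_k)^{\pi(f)}$ exists, then $c=\bigcup_{k\in K}c_k$.

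First I would dispose of the trivial inclusion. As $c\in\pi(f)\subseteq\mathcal{P}(I)$ is an upper bound of $(c_k)_{k\in K}$ already inside $\mathcal{P}(I)$, each $c_k\subseteq c$, so $u:=\bigcup_{k\in K}c_k\subseteq c$. Suppose towards a contradiction that $c\neq u$ and fix $x\in c\setminus u$. Set $d:=f^{(-1)}(\{f(x)\})$. The singleton $\{f(x)\}$ is a finite subset of $M^{n}$, hence belongs to $(\mathcal{F})^{n}$ (recall that $\mathcal{F}$ is the finite--cofinite field on $M$ and that $(\mathcal{F})^{n}$ is generated by products of members of $\mathcal{F}$), so $d\in\pi(f)$, and by the preceding proposition $d\in\mathrm{At}(\pi(f))$; moreover $x\in d$.

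The heart of the argument is to promote the information ``$x\notin c_k$ for every $k$'' to ``$d\wedge c_k=0$ for every $k$''. Fix $k\in K$: since $d$ is an atom of $\pi(f)$ and $c_k\in\pi(f)$, either $d\leq c_k$ or $d\wedge c_k=0$; the first alternative would give $x\in d\subseteq c_k\subseteq u$, contradicting the choice of $x$, so $d\wedge c_k=0$, i.e. $c_k\leq\neg d$. Consequently $c\wedge\neg d\in\pi(f)$ is again an upper bound of $(c_k)_{k\in K}$, whereas $c\wedge\neg d\lneq c$ because $x\in c\wedge d$. This contradicts the minimality of $c=(\bigvee_{k\in K}c_k)^{\pi(f)}$. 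Therefore $c=u$, which is exactly what is needed.

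I do not anticipate a serious obstacle: the entire content is the short contradiction argument above, which is the usual reason that an atomic subalgebra whose atoms are genuine subsets partitioning the index set is automatically regular inside a power set algebra. The only spot deserving a moment's care is the verification that $\{f(x)\}\in(\mathcal{F})^{n}$, so that the block $f^{(-1)}(\{f(x)\})$ really is an element, hence an atom, of $\pi(f)$; this is immediate from $\mathcal{F}$ being the finite--cofinite field on $M$.
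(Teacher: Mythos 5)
Your proof is correct and is essentially the paper's argument: both take a point $x$ in the purported supremum that is missed by the union, pass to the atom $f^{(-1)}(f(x))$ through it, observe that this atom must be disjoint from every member of the family, and then exhibit a strictly smaller upper bound in $\pi(f)$ via the complement of the atom. The only difference is cosmetic: the paper first normalizes to the case where the supremum in $\pi(f)$ equals $I$ and uses $\neg C$ directly, whereas you work with a general supremum $c$ and use $c\wedge\neg d$, which if anything makes the reduction step explicit.
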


	\begin{proof} Suppose otherwise, then 
	we can find $(C_t)_{t \in T} \in \pi(f)^T$ such that
		\[ (\bigvee_{t \in T} C_t)^{ \pi(f)} = I \neq (\bigvee_{t \in T} C_t)^{ \mathcal{P}(I)} = \bigcup_{t \in T} C_t, \]
and so there exists $i \in I$ such that $i \notin C_t$ for every $t \in T$. Let $C = f^{(-1)}(f(i))$. If there were $t \in T$ such that $C \subseteq C_t$, then $i \in f^{(-1)}(f(i)) = C \subseteq \bigcup_{t \in T} C_t$, a contradiction. Thus, $C \cap C_t = \emptyset$ for every $t \in T$, because $C$ is an atom.  But then $C_t \subseteq C^0$ for every $t \in T$ and $C^0 \neq I$ because $C \neq \emptyset$, a contradiction.
		
\end{proof}

	\begin{corollary} Let $f = (f_j)_{j < n}$ and $h = (h_j)_{j < k}$ be tuples of functions from $I$ to $M$. Then 
		\begin{enumerate}[i)]
			\item $\pi(h)$ is a regular subalgebra of $\pi(fh)$;
			\item In both $\pi(h)$ and $\pi(fh)$ sups are unions.
		\end{enumerate}		
\end{corollary}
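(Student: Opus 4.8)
The plan is to read off both parts directly from the lemma just proved, namely that $\pi(g)$ is a regular subalgebra of $\mathcal{P}(I)$ for every tuple $g$ of functions from $I$ to $M$. Before anything else I would record that $\pi(h)\leq\pi(fh)$: by part i) of the proposition describing $\pi(\cdot)$ on a tuple, applied to the concatenated tuple $fh$ (whose components are $f_0,\dots,f_{n-1},h_0,\dots,h_{k-1}$), we have $\pi(fh)=\langle\pi(f_0)\cup\cdots\cup\pi(f_{n-1})\cup\pi(h_0)\cup\cdots\cup\pi(h_{k-1})\rangle$, which contains $\langle\pi(h_0)\cup\cdots\cup\pi(h_{k-1})\rangle=\pi(h)$; so $\pi(h)$ is a subalgebra of $\pi(fh)$, as is needed for the statement of part i) to make sense.

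For part ii), I would apply the lemma to the tuples $h$ and $fh$ separately, getting that $\pi(h)$ and $\pi(fh)$ are each regular subalgebras of $\mathcal{P}(I)$. Hence whenever a family $(c_t)_{t\in T}$ from one of these algebras admits a supremum computed inside that algebra, that supremum coincides with its supremum in $\mathcal{P}(I)$, which is simply $\bigcup_{t\in T}c_t$. That is exactly the assertion that in $\pi(h)$ and in $\pi(fh)$ sups are unions.

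For part i), I would argue as follows. Let $(c_t)_{t\in T}\in\pi(h)^{T}$ be such that $s:=(\bigvee_{t\in T}c_t)^{\pi(h)}$ exists. By part ii) we have $s=\bigcup_{t\in T}c_t$, and since $\pi(h)\leq\pi(fh)$ this element $s$ lies in $\pi(fh)$. Now $s$ is an upper bound of $(c_t)_{t\in T}$ in $\pi(fh)$, and if $d\in\pi(fh)$ is any upper bound then $c_t\subseteq d$ for all $t$, so $\bigcup_{t\in T}c_t\subseteq d$, i.e. $s\leq d$ in $\pi(fh)$. Therefore $s=(\bigvee_{t\in T}c_t)^{\pi(fh)}$, which is precisely the statement that $\pi(h)$ is a regular subalgebra of $\pi(fh)$.

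As for the main obstacle: there is essentially none beyond keeping track of which Boolean algebra each supremum is taken in. All the substance sits in the preceding lemma, and once one knows that in every $\pi(g)$ an existing supremum is just a union, the transfer of regularity from $\mathcal{P}(I)$ down to the intermediate algebra $\pi(fh)$ is purely formal — the only point to be a little careful about is that leastness of the union among upper bounds in $\mathcal{P}(I)$ is inherited by the subalgebra $\pi(fh)$ because the union itself already belongs to $\pi(fh)$.
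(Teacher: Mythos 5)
Your proposal is correct and follows essentially the same route as the paper: the paper cites the general ``coherence'' fact that if $\mathcal{C}\leq\mathcal{B}\leq\mathcal{A}$ with $\mathcal{C}$ and $\mathcal{B}$ regular in $\mathcal{A}$ then $\mathcal{C}$ is regular in $\mathcal{B}$, and your argument is exactly an explicit unfolding of that transfer in the concrete case $\mathcal{A}=\mathcal{P}(I)$, where sups are unions by the preceding lemma.
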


\begin{proof} Immediate from the coherence of the regular subalgebra relations, i.e. if $\mathcal{C} \leq \mathcal{B}$, $\mathcal{B}$ is regular in $\mathcal{A}$ and $\mathcal{C}$ is regular in $\mathcal{A}$, then $\mathcal{C}$ is regular in $\mathcal{B}$.

\end{proof}


	



	\begin{lemma} Let $f = (f_j)_{j < n}$ and $h = (h_j)_{j < k}$ be tuples of functions from $I$ to $M$. Then $\pi(h) \leq^* \pi(fh)$.
\end{lemma}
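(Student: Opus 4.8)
**Proof plan for $\pi(h) \leq^* \pi(fh)$.**

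The goal is to verify the two clauses of the definition of $\leq^*$: that $\pi(h)$ is a regular subalgebra of $\pi(fh)$, and that for every $a \in \pi(fh)$ there is a least element of $\pi(h)$ above $a$. The first clause is already in hand: by the preceding Corollary, $\pi(h)$ is a regular subalgebra of $\pi(fh)$ (indeed in both algebras sups are unions). So the entire content of the lemma is the second clause — the existence of least upper bounds in $\pi(h)$.

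So I would first observe: $\pi(fh) = \langle fh^{(-1)}(a) \mid a \in \mathrm{ran}(fh)\rangle$ is atomic with atoms $(fh)^{(-1)}(a)$, and likewise $\pi(h)$ is atomic with atoms $h^{(-1)}(b)$. By Lemma~\ref{unicity_of_atom} applied to $\pi(h) \leq \pi(fh)$ (both atomic, $\pi(h)$ regular), each atom $\alpha = (fh)^{(-1)}(a)$ of $\pi(fh)$ lies below a unique atom $\pi(h)^{\geq\alpha}\!\!\downarrow$ of $\pi(h)$; concretely, writing $a = (a', a'')$ with $a' \in \mathrm{ran}(f)$ on the $f$-coordinates and $a'' \in \mathrm{ran}(h)$ on the $h$-coordinates, that atom is $h^{(-1)}(a'')$. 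Now given an arbitrary $X \in \pi(fh)$, write $X$ as a (finite, in fact) union of atoms of $\pi(fh)$: $X = \bigcup_{j} \alpha_j$. Then I claim $c := \big(\bigvee_j \pi(h)^{\geq \alpha_j}\!\!\downarrow\big)^{\pi(h)}$ is the least element of $\pi(h)$ above $X$. This sup exists because in $\pi(h)$ sups are unions (that Corollary again), and $c$ is a union of atoms of $\pi(h)$, hence genuinely in $\pi(h)$; it dominates each $\alpha_j$ hence $X$. For leastness: any $d \in \pi(h)$ with $d \geq X$, being atomic, is a union of atoms of $\pi(h)$, and each $\alpha_j \leq X \leq d$ forces (by atomicity, $\alpha_j$ meets exactly one atom of $\pi(h)$) the atom $\pi(h)^{\geq\alpha_j}\!\!\downarrow \leq d$; so $c \leq d$.

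The one point that needs a little care is that the relevant sups are taken in the right algebras and genuinely exist — but this is exactly what the Corollary "in both $\pi(h)$ and $\pi(fh)$ sups are unions" delivers, so there is no real obstacle; the main (mild) subtlety is bookkeeping the identification of the atom $\pi(h)^{\geq\alpha}\!\!\downarrow$ with $h^{(-1)}(a'')$ and checking that a decomposition of $X$ into $\pi(fh)$-atoms is available (it is, since $\pi(fh)$ is atomic and, being a field of subsets of $I$ with sups equal to unions, $X$ is literally the union of the atoms it contains). With these identifications the leastness argument is a one-line atom count as above, so I expect the proof to be short. The only place to be slightly vigilant is not to conflate the element $c \in \pi(h)$ with its avatar computed inside $\pi(fh)$; regularity of $\pi(h)$ in $\pi(fh)$ is precisely what says these agree, so invoking the Corollary at that moment closes the gap.
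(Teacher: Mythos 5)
Your plan has the right overall shape (project the atoms of $\pi(fh)$ below $X$ to atoms of $\pi(h)$, take their union, check leastness), and both the leastness argument and the identification of $\pi(h)^{\geq\alpha}\!\!\downarrow$ with $h^{(-1)}(a'')$ are correct. But there is a genuine gap at the one step that carries the whole weight of the lemma: the claim that $c=\bigcup_j \pi(h)^{\geq\alpha_j}\!\!\downarrow$ belongs to $\pi(h)$ (equivalently, that this sup exists in $\pi(h)$). The Corollary you invoke only says that sups, \emph{when they exist} in these algebras, coincide with unions; it does not say that a union of elements of $\pi(h)$ is again an element of $\pi(h)$, and in general a union of atoms of $\pi(h)$ need not be: if $I$ is infinite and $h$ is a single injective function, then $\pi(h)$ is the finite--cofinite field on $I$, and an infinite, co-infinite union of its atoms lies outside it. Your parenthetical ``(finite, in fact)'' --- which would indeed close the gap, since a finite join of atoms of $\pi(h)$ is in $\pi(h)$ --- is false: an element of $\pi(fh)$ is a finite union of sets $(fh)^{(-1)}(\prod_{j<n}D_{i,j}\times\prod_{j<k}E_{i,j})$, but each such set can dominate infinitely many atoms; already $X=I$ is an infinite union of atoms of $\pi(fh)$ whenever $\mathrm{ran}(fh)$ is infinite.

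What is missing is precisely the use of the concrete form of the elements of $\pi(fh)$, which is how the paper argues: writing $X=(fh)^{(-1)}\big(\bigcup_{i<t}(\prod_{j<n}D_{i,j}\times\prod_{j<k}E_{i,j})\big)$ with the $D_{i,j},E_{i,j}$ finite or cofinite, it identifies the union of the projected atoms $\pi(h)^{\geq\alpha}\!\!\downarrow$, for $\alpha$ ranging over the atoms of $\pi(fh)$ below $X$, with the finite union $\bigcup_{i<t}h^{(-1)}(\prod_{j<k}E_{i,j})$, which is manifestly an element of $\pi(h)$. Some computation of this kind, relating the projected atoms to the $h$-sides of the rectangles defining $X$, is unavoidable; once the membership $c\in\pi(h)$ is secured, your leastness argument does finish the proof, but as written you have assumed exactly the point the lemma is about.
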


	\begin{proof} Let $(A_i)_{i \in I} \in \mathrm{At}(\pi(fh))^I$ and suppose that	$\bigcup_{i \in I} A_i = A$ exists in $\pi(fh)$. Let $C_i = \pi(h)^{\geq A_i}\!\! \downarrow$, we want to show that $\bigcup_{i \in I} C_i \in \pi(h)$. Given that $A \in \pi(fh)$, there are $(D_{i, j})_{\substack{i < t \\ j < n} } \in \mathcal{F}^{nt}$ and $(E_{i, j})_{\substack{i < t \\ j < k} } \in \mathcal{F}^{kt}$ such that
		\[ A = \bigcup_{i < t}((fh)^{(-1)}(\prod_{j < n} D_{i, j} \times \prod_{j < k} E_{i, j})) = (fh)^{(-1)}(\bigcup_{i < t}(\prod_{j < n} D_{i, j} \times \prod_{j < k} E_{i, j})) . \]
But then
		\[ \bigcup_{i \in I} C_i = \bigcup_{i < t}(h^{(-1)}(\prod_{j < k} E_{i, j})) \in \pi(h). \]
\end{proof}

	\begin{definition} Let $f = (f_i)_{i < n}$, $g = (g_i)_{i < m}$ and $h = (h_i)_{i < k}$ be tuples of functions from $I$ to $M$. We define
\[\begin{array}{rcl}
 & f \displaystyle \pureindep[h] g &  \\
 & \Leftrightarrow &  \\
 & \forall p, q \in I (h(p) = h(q) \Rightarrow \exists t \in I (h(t) = h(p) \, \& \, f(t) = f(p) \, \& \, g(t) = g(q))). & 
		
\end{array} \]
\end{definition}

	We now show that two tuples of functions are independent over a third if and only if the corresponding Boolean algebras are independent (in the sense of Definition \ref{independence} and Corollary \ref{cor_for_indep_tuples}). As we will see in Section~\ref{indep_in_dat_th}, the following theorem establishes a reduction of the form of independence at play in database theory and independence logic to dividing in $\mathrm{ABA}$. 
	
		\begin{theorem}\label{reduction} Let $f = (f_i)_{i < n}$, $g = (g_i)_{i < m}$ and $h = (h_i)_{i < k}$ be tuples of functions from $I$ to $M$. Then
		\[\begin{array}{rcl}
		 & f \displaystyle \pureindep[h] g &  \\
		 & \Leftrightarrow &  \\
		 & \pi(fh) \displaystyle \pureindep[\pi(h)] \pi(gh) . & 
				
\end{array} \]
\end{theorem}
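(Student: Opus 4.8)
The plan is to unravel both sides of the equivalence into statements about atoms and the ``downward'' map $\pi(h)^{\geq -}\!\!\downarrow$, and then match them up using Corollary~\ref{cor_for_indep_tuples}. By the preceding lemmas, $\pi(h) \leq^* \pi(fh)$ and $\pi(h)$, $\pi(gh)$, $\pi(fh)$ are all atomic, so Corollary~\ref{cor_for_indep_tuples} applies (after noting that $\pi(gh)$ embeds into $\mathcal{P}(I)$ alongside $\pi(fh)$, and $\pi(h) \leq^* \pi(gh)$ as well). Thus $\pi(fh) \pureindep[\pi(h)] \pi(gh)$ is equivalent to: for all atoms $A \in \mathrm{At}(\pi(fh))$ and $B \in \mathrm{At}(\pi(gh))$, if $A \subseteq \pi(h)^{\geq B}\!\!\downarrow$ then $A \cap B \neq \emptyset$. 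The key observation is the explicit description of atoms: $A = (fh)^{(-1)}((fh)(p))$ for some $p \in I$, i.e. $A = \{\,t : f(t) = f(p)\ \&\ h(t) = h(p)\,\}$; similarly $B = \{\,t : g(t) = g(q)\ \&\ h(t) = h(q)\,\}$ for some $q \in I$; and $\pi(h)^{\geq B}\!\!\downarrow = h^{(-1)}(h(q)) = \{\,t : h(t) = h(q)\,\}$, since this is the unique $h$-atom containing $B$.

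With these identifications in hand I would argue both directions directly. For $(\Rightarrow)$: assume $f \pureindep[h] g$, and take atoms $A, B$ as above with $A \subseteq \pi(h)^{\geq B}\!\!\downarrow$. The inclusion $A \subseteq h^{(-1)}(h(q))$ forces $h(p) = h(q)$ (pick any element of $A$, which is nonempty since $p \in A$). By the definition of $f \pureindep[h] g$ applied to $p, q$ there is $t \in I$ with $h(t) = h(p)$, $f(t) = f(p)$, $g(t) = g(q)$; this $t$ lies in both $A$ and $B$, so $A \cap B \neq \emptyset$, as required. For $(\Leftarrow)$: assume the atom condition, and take $p, q \in I$ with $h(p) = h(q)$. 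Set $A = (fh)^{(-1)}((fh)(p))$ and $B = (gh)^{(-1)}((gh)(q))$, which are atoms of $\pi(fh)$ and $\pi(gh)$ respectively. Since $h(p) = h(q)$, we have $\pi(h)^{\geq B}\!\!\downarrow = h^{(-1)}(h(q)) = h^{(-1)}(h(p)) \supseteq A$, so the hypothesis gives some $t \in A \cap B$; unwinding, $t$ witnesses $h(t) = h(p)$, $f(t) = f(p)$, $g(t) = g(q)$, which is exactly the conclusion of $f \pureindep[h] g$.

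The main obstacle, such as it is, is purely bookkeeping: one must be careful that Corollary~\ref{cor_for_indep_tuples} is being invoked with the right ``small'' algebra $\pi(h)$ sitting nicely inside \emph{both} $\pi(fh)$ and the analogously-defined $\pi(gh)$ (which requires the symmetric version of the lemma ``$\pi(h)\leq^*\pi(fh)$'', applied with $g$ in place of $f$), and that $\pi(fh)$ and $\pi(gh)$ are genuinely subalgebras of a common $\mathcal{P}(I)$ so that the ambient $\mathcal{D}$ in Corollary~\ref{cor_for_indep_tuples} can be taken to be $\mathcal{P}(I)$. I would also double-check the claim $\pi(h)^{\geq B}\!\!\downarrow = h^{(-1)}(h(q))$: the set $h^{(-1)}(h(q))$ is an atom of $\pi(h)$ by the atom description, it contains $B = (gh)^{(-1)}((gh)(q))$, and atoms containing a given atom are unique (Lemma~\ref{unicity_of_atom}), so it is indeed the value of the downward map. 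Once these identifications are pinned down the equivalence is immediate, and no nontrivial estimation or construction is needed.
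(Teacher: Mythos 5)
Your proposal is correct and follows essentially the same route as the paper's proof: both invoke Corollary~\ref{cor_for_indep_tuples} together with the explicit description of the atoms of $\pi(fh)$, $\pi(gh)$, $\pi(h)$ as fibres of the corresponding maps, identify $\pi(h)^{\geq B}\!\!\downarrow$ with $h^{(-1)}(h(q))$, and translate each direction into producing a common point $t \in A \cap B$ versus a witness $t$ for the independence condition. Your explicit remarks on why $\pi(h) \leq^* \pi(gh)$ holds symmetrically and why the ambient algebra can be taken to be $\mathcal{P}(I)$ are bookkeeping points the paper leaves implicit, but they do not change the argument.
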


\begin{proof} We use Corollary \ref{cor_for_indep_tuples}. For the direction $(\Leftarrow)$, let $p, q \in I$ with $\vec{h}(p) = \vec{h}(q)$. Let $\vec{d} \in M^n$, $\vec{e} \in M^m$ and $\vec{v} \in M^k$ be such that
	\[ \vec{d}\vec{v} = \vec{f}\vec{h}(p) \; \text{ and } \; \vec{e}\vec{v} = \vec{g}\vec{h}(q),\]
and let
	\[ A = (\vec{f}\vec{h})^{(-1)}(\vec{d}\vec{v}) \; \text{ and } \; B = (\vec{g}\vec{h})^{(-1)}(\vec{e}\vec{v}). \]
Notice that $A \in \mathrm{At}(\pi(\vec{f} \vec{h}))$ and $B \in \mathrm{At}(\pi(\vec{g} \vec{h}))$. Clearly $\pi(\vec{h})^{\geq B}\!\! \downarrow = \vec{h}^{(-1)}(\vec{v})$, and so $A \subseteq \pi(\vec{h})^{\geq B}\!\! \downarrow$. Thus, by hypothesis, $A \cap B \neq \emptyset$. Let $t \in A \cap B$, then
	\[ t \in (\vec{f}\vec{h})^{(-1)}(\vec{d}\vec{v}) \; \text{ and } \; t \in (\vec{g}\vec{h})^{(-1)}(\vec{e}\vec{v}). \]
And so 
\[ \vec{h}(t) = \vec{v} = \vec{h}(p) \, \text{ and } \, \vec{f}(t) = \vec{d} = \vec{f}(p) \, \text{ and } \, \vec{g}(t) = \vec{e} = \vec{g}(q)). \]

For the direction $(\Rightarrow)$, let $A \in \mathrm{At}(\pi(\vec{f} \vec{h}))$, $B \in \mathrm{At}(\pi(\vec{g} \vec{h}))$ and $C = \pi(\vec{h})^{\geq B}\!\! \downarrow$, and suppose that $A \subseteq C$. By our description of the atoms of the algebras in question, there are $\vec{d} \in M^n$, $\vec{e} \in M^m$ and $\vec{v}, \vec{w} \in M^k$ such that
\[ A = (\vec{f}\vec{h})^{(-1)} (\vec{d}\vec{v}) \; \text{ and } \; B = (\vec{g}\vec{h})^{(-1)} (\vec{e}\vec{w}). \]
Notice now that $C = \vec{h}^{(-1)}(\vec{w})$ and let $i \in (\vec{f}\vec{h})^{(-1)} (\vec{d}\vec{v})$, then $\vec{v} = \vec{h}(i) = \vec{w}$, because $(\vec{f}\vec{h})^{(-1)} (\vec{d}\vec{v}) \subseteq \vec{h}^{(-1)}(\vec{w})$. Let $p \in A$ and $q \in B$, then $\vec{h}(p) = \vec{h}(q)$ and so, by hypothesis, there exists $t \in I$ such that
\[ \vec{h}(t) = \vec{v} = \vec{h}(p) \, \text{ and } \, \vec{f}(t) = \vec{d} = \vec{f}(p) \, \text{ and } \, \vec{g}(t) = \vec{e} = \vec{g}(q). \]
But then $A \cap B \neq \emptyset$, because
	\[\begin{array}{rcl}
t \in (\vec{f}\vec{h})^{(-1)}(\vec{f}\vec{h}(t)) & = & (\vec{f}\vec{h})^{(-1)}(\vec{f}\vec{h}(p)) \\
 		& = & (\vec{f}\vec{h})^{(-1)}(\vec{d}\vec{v}) \\
		& = & A,
\end{array} \]
and 		
	\[\begin{array}{rcl}
t \in (\vec{g}\vec{h})^{(-1)}(\vec{g}\vec{h}(t)) & = & (\vec{g}\vec{h})^{(-1)}(\vec{g}(q)\vec{h}(p)) \\
		& = & (\vec{g}\vec{h})^{(-1)}(\vec{g}(q)\vec{h}(q))  \\
 		& = & (\vec{g}\vec{h})^{(-1)}(\vec{e}\vec{w})  \\
		& = & B.
\end{array} \]
\end{proof}

\section{Independence in Database Theory and Team Semantics}\label{indep_in_dat_th}

\subsection{Database Independence}\label{emb_mult_dep}

	Let $\mathrm{Var}$ be a countable set of symbols, called {\em attributes} or {\em individual variables}. A {\em relation schema} is a finite set $R = \left\{ x_{0}, ..., x_{n-1} \right\}$ of attributes from $\mathrm{Var}$. Each attribute $x_i$ of a relation schema is associated with a domain $\mathrm{dom}(x_i)$ which represents the set of possible values that can occur as values of $x_i$. A {\em tuple} over $R$ is a function $t: R \rightarrow \bigcup_{i < n} \mathrm{dom}(x_i)$ with $t(x_i) \in \mathrm{dom}(x_i)$, for all $i < n$. A {\em database}\footnote{In the context of database theory a database is usually taken to be a {\em finite} set of tuples over a relation schema $R$. In our framework the assumption of finiteness does not play any role and so we drop it.} $r$ over $R$ is a set of tuples over $R$. For $x \subseteq R$ and $t \in r$ we let $t(x)$ to be the restriction of the function $t$ to $x$.

	\begin{definition}[Functional dependence \cite{armstrong}] Let $R$ be a relation schema, $x$ and $y$ tuples of attributes from $R$, and $r$ a database over $R$. We define
		
		\[ r \text{ satisfies } x \rightarrow y \;\; \Leftrightarrow \;\; \forall t_0, t_1 \in r (t_0(x) = t_1(x) \Rightarrow t_0(y) = t_1(y)). \]
If $r$ satisfies $x \rightarrow y$ we say that $r$ manifests the {\em functional dependency} $x \rightarrow y$.
\end{definition}

	\begin{definition}[Database independence \cite{sagiv}\footnote{In standard references in database theory (among which \cite{sagiv}) it is usually assumed that $x \cap y \subseteq z$. We relax this assumption because it comes at no conceptual cost and simplifies the treatment.}] Let $R$ be a relation schema, $x$, $y$ and $z$ tuples of attributes from $R$, and $r$ a database over $R$. We define
		
		\[\begin{array}{rcl}
		 &  r \text{ satisfies } z \twoheadrightarrow x \, | \, y &  \\
		 & \Leftrightarrow &  \\
		 & \!\!\!\!\!\forall t_0, t_1 \in r (t_0(z) = t_1(z) \Rightarrow \exists t_2 \in r (t_2(z) = t_0(z) \, \& \, t_2(x) = t_0(x) \, \& \, t_2(y) = t_1(y))). & 

\end{array} \]
If $r$ satisfies $z \twoheadrightarrow x \, | \, y$ we say that $r$ manifests the {\em database independency} $z \twoheadrightarrow x \, | \, y$. 
	
\end{definition}

	In the database theory literature the term {\em embedded multivalued dependence} is preferred to the simpler database independence. The reason for this choice of terminology is that embedded multivalued dependence is a generalization of functional dependence, as the following proposition shows.

	\begin{proposition}\label{functional_dep_as_embed} Let $R$ be a relation schema, $x$ and $y$ tuples of attributes from $R$, and $r$ a database over $R$. Then
		\[ r \text{ satisfies }  x \rightarrow y \;\; \Leftrightarrow \;\; r \text{ satisfies }  x \twoheadrightarrow y \, | \, y \]
	
\end{proposition}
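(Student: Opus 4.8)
The plan is to unwind both sides of the claimed equivalence directly from the definitions, since the statement $r \text{ satisfies } x \to y \Leftrightarrow r \text{ satisfies } x \twoheadrightarrow y \,|\, y$ is essentially a tautology once one writes out what each condition asserts about pairs and triples of tuples. So I would not invoke any of the Boolean-algebra machinery of Sections~\ref{preliminary_lemmas}--\ref{sec_on_red}; this is a purely elementary combinatorial verification about databases.

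For the direction $(\Rightarrow)$, assume $r$ satisfies $x \to y$, i.e.\ for all $t_0, t_1 \in r$, $t_0(x) = t_1(x)$ implies $t_0(y) = t_1(y)$. To check $x \twoheadrightarrow y \,|\, y$, take $t_0, t_1 \in r$ with $t_0(x) = t_1(x)$; I must produce $t_2 \in r$ with $t_2(x) = t_0(x)$, $t_2(y) = t_0(y)$, and $t_2(y) = t_1(y)$. Simply take $t_2 = t_0$: the first two conjuncts are immediate, and the third, $t_0(y) = t_1(y)$, is exactly what functional dependence gives us from $t_0(x) = t_1(x)$. (Note the role reversal of $z$ and $x$ here: in the definition of $z \twoheadrightarrow x \,|\, y$ the ``context'' is $z$, so in $x \twoheadrightarrow y \,|\, y$ the context is $x$ and the two ``sides'' are both $y$.)

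For the direction $(\Leftarrow)$, assume $r$ satisfies $x \twoheadrightarrow y \,|\, y$, and let $t_0, t_1 \in r$ with $t_0(x) = t_1(x)$; I must show $t_0(y) = t_1(y)$. Applying the database-independence condition with context $x$ to the pair $t_0, t_1$, I obtain $t_2 \in r$ with $t_2(x) = t_0(x)$, $t_2(y) = t_0(y)$, and $t_2(y) = t_1(y)$. Chaining the last two equalities gives $t_0(y) = t_2(y) = t_1(y)$, as required. I do not anticipate any genuine obstacle; the only point needing care is keeping the variable bindings straight when instantiating the schematic definition of $z \twoheadrightarrow x \,|\, y$ with $z = x$, $x = y$, $y = y$, so that the symmetric $y\,|\,y$ split collapses the existential witness back onto a functional-dependence statement.
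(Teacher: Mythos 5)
Your proof is correct and matches the paper's intent: the paper simply declares the equivalence ``Immediate,'' and your argument is exactly the direct unwinding of the definitions (taking $t_2 = t_0$ for one direction and chaining $t_0(y) = t_2(y) = t_1(y)$ for the other) that justifies that claim.
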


	\begin{proof} Immediate.
	
\end{proof}

	Let $R$ be a relation schema, $x$ a tuple of attributes from $R$ and $r$ a database over $R$. Let also $r = (t_i)_{i \in I}$ and $M = \bigcup_{i < \omega} \mathrm{dom}(x_i)$. We can then define a function $\dot{x}: I \rightarrow M^n$ by letting $\dot{x}(i) = t_i(x)$ for every $i \in I$. Clearly, for $x = (x_{j_0}, ..., x_{j_{m-1}})$ and $i \in I$, we have that $\dot{x}(i) = (\dot{x}_{j_0}(i), ..., \dot{x}_{j_{m-1}}(i))$, and so we can identify the objects $\dot{x}$ and $(\dot{x}_{j_0}, ..., \dot{x}_{j_{m-1}})$. Following the notation of Section~\ref{sec_on_red}, we can then consider the Boolean algebra $\pi(\dot{x})$ (as a subalgebra of $\mathcal{P}(I)$). Notice now that being the theory $\mathrm{ABA}$ the model completion of the theory $\mathrm{BA}$, there is an embedding $i: \pi(\dot{x}) \rightarrow \mathfrak{M}$, where as usual we denote by $\mathfrak{M}$ the monster model of $\mathrm{ABA}$. Thus $\pi(\dot{x}) \cong i(\pi(\dot{x}))$, and so, modulo isomorphism, the Boolean algebra $\pi(\dot{x})$ can be thought as living in $\mathfrak{M}$. This little argument allows us to formulate in exact terms a reduction of database independence to dividing in $\mathrm{ABA}$.  
	
	\begin{theorem} Let $R$ be a relation schema, $x$, $y$ and $z$ tuples of attributes from $R$, and $r$ a database over $R$. Then
		
		\[ r \text{ satisfies }  z \twoheadrightarrow x \, | \, y \;\; \Leftrightarrow \;\; \dot{x} \pureindep[\dot{z}] \dot{y} \;\; \Leftrightarrow \;\; \pi(\dot{x}\dot{z}) \pureindep[\pi(\dot{z})] \pi(\dot{y}\dot{z}) \]
		
		\[\Leftrightarrow \;\; \forall a \in \mathrm{At}(\pi(\dot{x}\dot{z})) \text{ and } b \in \mathrm{At}(\pi(\dot{y}\dot{z}))  \;\; \bigg[ \; a \displaydivindep[\pi(\dot{z}) ] b \; \bigg]. \]
	
\end{theorem}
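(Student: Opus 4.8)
The statement is a chain of three biconditionals, and the plan is to verify each link in turn using the apparatus already built. \emph{First equivalence.} That $r$ satisfies $z \twoheadrightarrow x \mid y$ if and only if $\dot{x} \pureindep[\dot{z}] \dot{y}$ is a bare unwinding of definitions: writing $r = (t_i)_{i \in I}$ one has $\dot{x}(i) = t_i(x)$ and likewise for $\dot{y}, \dot{z}$, so $t_p(z) = t_q(z)$ exactly when $\dot{z}(p) = \dot{z}(q)$ and similarly in the $x$- and $y$-coordinates, and substituting these equivalences into the defining clause of $z \twoheadrightarrow x \mid y$ reproduces verbatim that of $\dot{x} \pureindep[\dot{z}] \dot{y}$; here one uses that, as arranged just before the theorem, $M = \bigcup_{i < \omega} \mathrm{dom}(x_i)$ is the ambient set over which $\pi$ is formed.

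\emph{Second and third equivalences.} That $\dot{x} \pureindep[\dot{z}] \dot{y}$ if and only if $\pi(\dot{x}\dot{z}) \pureindep[\pi(\dot{z})] \pi(\dot{y}\dot{z})$ is exactly Theorem~\ref{reduction} with $f = \dot{x}$, $g = \dot{y}$, $h = \dot{z}$. For the third equivalence I would appeal to the Main Theorem, after moving everything into $\mathfrak{M}$: put $\mathcal{N} = \pi(\dot{x}\dot{y}\dot{z}) \leq \mathcal{P}(I)$; since $\mathrm{ABA}$ is the model completion of $\mathrm{BA}$ there is an embedding $\iota \colon \mathcal{N} \hookrightarrow \mathfrak{M}$, and identifying $\mathcal{N}$ with $\iota(\mathcal{N})$ identifies the three subalgebras $\pi(\dot{x}\dot{z})$, $\pi(\dot{y}\dot{z})$, $\pi(\dot{z})$ with subalgebras of $\mathfrak{M}$ coherently. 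The hypotheses of Theorem~\ref{char_div} then hold: $\pi(\dot{z})$ is atomic by the proposition recording atomicity of $\pi(f)$, and $\pi(\dot{z}) \leq^* \pi(\dot{x}\dot{z})$ and $\pi(\dot{z}) \leq^* \pi(\dot{y}\dot{z})$ by the lemma giving $\pi(h) \leq^* \pi(fh)$ (with $f = \dot{x}$, resp.\ $f = \dot{y}$). So Theorem~\ref{char_div} with $\mathcal{A} = \pi(\dot{x}\dot{z})$, $\mathcal{B} = \pi(\dot{y}\dot{z})$, $\mathcal{C} = \pi(\dot{z})$ gives $\pi(\dot{x}\dot{z}) \pureindep[\pi(\dot{z})] \pi(\dot{y}\dot{z})$ if and only if $a \divindep[\pi(\dot{z})] b$ for \emph{all} $a \in \pi(\dot{x}\dot{z})$, $b \in \pi(\dot{y}\dot{z})$; a fortiori it holds for atoms, which is the forward half of the last displayed equivalence.

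\emph{The atoms-only converse.} It remains to see that the atoms-only clause already forces $\pi(\dot{x}\dot{z}) \pureindep[\pi(\dot{z})] \pi(\dot{y}\dot{z})$, and for this I would argue contrapositively: if the independence fails, Corollary~\ref{cor_for_indep_tuples} supplies atoms $a \in \mathrm{At}(\pi(\dot{x}\dot{z}))$ and $b \in \mathrm{At}(\pi(\dot{y}\dot{z}))$ with $a \wedge b = 0$ and $a \leq c$, where $c = \pi(\dot{z})^{\geq b}\!\! \downarrow$; then, as in the $(\Leftarrow)$ part of Theorem~\ref{char_div} but now arranging the indiscernible sequence to cover $c$ — for instance $b_i = c \wedge \neg u_i$ with $(u_i)_{i < \omega}$ a pairwise disjoint $\pi(\dot{z})$-indiscernible sequence below $c$ and $u_0 = c \wedge \neg b$, obtained from Lemma~\ref{splitting_atomless} and saturation, so that $(b_i)_{i < \omega}$ is $\pi(\dot{z})$-indiscernible with $b_0 = b$ and $\bigvee_{i < \omega} b_i = c$ — the constraints $x \leq c$ and $x \wedge b_i = 0$ for all $i$ force $x = 0$, so no realisation $x$ of $\mathrm{tp}(a / \pi(\dot{z})\, b)$ can keep $(b_i)$ indiscernible over $\pi(\dot{z})\, x$; hence $a \not\!\divindep[\pi(\dot{z})] b$, and the atoms-only clause fails too.

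Two points call for care, the second being where I expect the work to concentrate. First, the passage through $\mathfrak{M}$: atomicity, the relation $\leq^*$ and the relation $\pureindep$ must all transfer along $\iota$ — they do, being intrinsic to the relevant pair(s) of Boolean algebras and hence isomorphism-invariant — and dividing, which only makes sense inside $\mathfrak{M}$, must be computed with respect to the same copies of the parameter algebras on both sides; since $\iota$ preserves the Boolean structure and, by quantifier elimination in $\mathrm{ABA}$, preserves first-order types over any parameters, there is no ambiguity. Second, the reconciliation of the ``all elements'' form of Theorem~\ref{char_div} with the ``all atoms'' form of the statement: the forward direction is immediate, but the converse genuinely needs the atom-level witness of Corollary~\ref{cor_for_indep_tuples} together with the ``covering'' indiscernible sequence just sketched, and one should check carefully that this sequence really does witness dividing of $\mathrm{tp}(a / \pi(\dot{z})\, b)$. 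Everything else reduces to routine manipulation of the definitions.
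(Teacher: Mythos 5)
Your proposal is correct and follows the same route as the paper, whose entire proof is the one-liner ``an immediate consequence of Theorems~\ref{reduction} and \ref{char_div}'': the first equivalence is the definitional unwinding, the second is Theorem~\ref{reduction}, and the third is fed into Theorem~\ref{char_div} after embedding the algebras into $\mathfrak{M}$ via model completeness, exactly as you do. The one place where you genuinely add something is the ``atoms-only converse'': Theorem~\ref{char_div} quantifies over \emph{all} $a \in A$, $b \in B$, while the statement quantifies only over atoms, and the paper silently treats this discrepancy as part of the ``immediate''. Your bridge is sound: if $\pi(\dot{x}\dot{z}) \not\!\pureindep[\pi(\dot{z})] \pi(\dot{y}\dot{z})$, Corollary~\ref{cor_for_indep_tuples} (applicable since all three algebras are atomic and $\pi(\dot{z}) \leq^* \pi(\dot{x}\dot{z}), \pi(\dot{y}\dot{z})$) yields atoms $a, b$ with $a \wedge b = 0$ and $a \leq c := \pi(\dot{z})^{\geq b}\!\!\downarrow$, and your sequence $b_i = c \wedge \neg u_i$, with $(u_i)$ pairwise disjoint $C$-indiscernible below $c$ and $u_0 = c \wedge \neg b$, does witness dividing of $\mathrm{tp}(a/\pi(\dot{z})\,b)$: any realisation $a'$ keeping $(b_i)$ indiscernible would satisfy $a' \leq c$ and $a' \wedge b_i = 0$ for all $i$, hence $a' \leq u_i$ for all $i$, forcing $a' = 0$ against $a' \neq 0$. (An alternative to your sequence: apply the $(\Leftarrow)$ argument of Theorem~\ref{char_div} to the comparable pair $a \leq c \wedge \neg b$ and use that $c \wedge \neg b \in \mathrm{dcl}(\pi(\dot{z})\,b)$ together with monotonicity of dividing.) Minor quibble: the assertion $\bigvee_{i<\omega} b_i = c$ is neither needed nor justified in the monster model, but nothing in your argument uses it.
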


	\begin{proof} An immediate consequence of Theorems~\ref{reduction} and \ref{char_div}.
	
\end{proof}

	As we saw above, in the case of $\mathrm{ABA}$ the choice of embeddings with respect to which we think of the algebras arising from a database as living in the monster model for $\mathrm{ABA}$ does not matter, any choice of embeddings would do. In the case of $\mathrm{ATBA}$ this is not the case. Remark~\ref{remark_on_atom} shows in fact that in the case of $\mathrm{ATBA}$ the choice of embeddings {\em does} matter. For this reason, $\mathrm{ABA}$ is a more natural context than $\mathrm{ATBA}$ for our reduction to take place.
	
\subsection{Independence in Team Semantics}\label{indep_team_sem}

	We now introduce what is known as independence logic \cite{GV12}. The semantics of this logic is formulated using sets of assignments, also called {\em teams}, instead of single assignments. This new way of defining semantics for logical languages was introduced by Hodges in \cite{hodges} and then developed by V\"a\"an\"anen in \cite{MR2351449}. The salient characteristic of independence logic is that it contains syntactic expressions modelling dependence and independence, called {\em dependence and independence atoms}, respectively. On top of this, the language of independence logic allows relational atomic formulas, Boolean connectives and quantifiers, as in classical first-order logic. We focus here only on (in)dependence atoms and consequently give the semantics only for this fragment of the language of this logic. The truth definition can then be extended from independence atoms to any formula in the language of independence logic in a canonical way. The resulting logic has a non-classical flavour and possesses the expressive power of existential second-order logic. The exact definitions are beyond the scope of this paper and will not be presented here, for details see \cite{MR2351449}.
	
	Let $\mathcal{M}$ be a first-order structure and $V \subseteq \mathrm{Var}$ a finite set of variables. An assignment $s$ on $\mathcal{M}$ with domain $\mathrm{dom}(s) = V$ is a mapping from $V$ to $M$. A team $X$ on $\mathcal{M}$ with domain $\mathrm{dom}(X) = V$ is a set of assignments with domain $V$. Let $L$ be a fixed but arbitrary signature. The dependence and independence atoms in the signature $L$ are syntactic expressions of the form $\dep(u, v)$ and 
$u \perpc{w} v$, respectively, where $u, v, w \in (\mathrm{Term}_{L})^{< \omega}$. 
	Given $u \in (\mathrm{Term}_{L})^{< \omega}$, we denote by $\mathrm{Var}(u)$ the set of variables occurring in at least one of the terms in the tuple $u$. 

	\begin{definition}\label{seman_indep_logic} Let $\mathcal{M}$ be an $L$-structure, $X$ a team on $\mathcal{M}$ and $\mathrm{Var}(u), \mathrm{Var}(v) \subseteq \mathrm{dom}(X)$. We let
		
		\[ \mathcal{M}\models_X \dep(u, v) \;\; \Leftrightarrow \;\; \forall s, s' \in X \, (s(u) = s'(u) \Rightarrow s(v) = s'(v)). \]
		
\end{definition}

	\begin{definition}  Let $\mathcal{M}$ be an $L$-structure, $X$ a team on $\mathcal{M}$ and $\mathrm{Var}(u), \mathrm{Var}(v)$ and $\mathrm{Var}(w) \subseteq \mathrm{dom}(X)$. We let
		
		\[\begin{array}{rcl}
		 & \mathcal{M} \models_X u \perpc{w} v &  \\
		 & \Leftrightarrow &  \\
		 & \!\!\!\!\!\forall s, s' \in X (s(w) = s'(w) \Rightarrow \exists s'' \in X (s''(w) =s(w) \&  s''(u) = s(u)  \&  s''(v) = s'(v))). & 

\end{array} \]
		
\end{definition}

	In analogy with Proposition~\ref{functional_dep_as_embed}, we have the following. It shows that our syntax could have been chosen without dependence atoms.

	\begin{proposition}\label{dep_atom_as_indep_atom} Let $\mathcal{M}$ be a first-order structure, $X$ a team on $\mathcal{M}$ and $\mathrm{Var}(u), \newline \mathrm{Var}(v) \subseteq \mathrm{dom}(X)$. Then
		
		\[ \mathcal{M} \models_X \dep(u, v) \;\; \Leftrightarrow \;\; \mathcal{M} \models_X v \perpc{u} v. \]
	
\end{proposition}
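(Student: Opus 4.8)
The plan is to unwind the two semantic clauses and observe, exactly as in Proposition~\ref{functional_dep_as_embed}, that the equivalence is purely formal. By Definition~\ref{seman_indep_logic}, $\mathcal{M} \models_X \dep(u,v)$ asserts that for all $s, s' \in X$, if $s(u) = s'(u)$ then $s(v) = s'(v)$; while $\mathcal{M} \models_X v \perpc{u} v$ asserts that for all $s, s' \in X$ with $s(u) = s'(u)$ there exists $s'' \in X$ with $s''(u) = s(u)$, $s''(v) = s(v)$ and $s''(v) = s'(v)$. The hypotheses $\mathrm{Var}(u), \mathrm{Var}(v) \subseteq \mathrm{dom}(X)$ guarantee that both atoms are well-formed over $X$, so no side conditions intervene.

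For the direction $(\Rightarrow)$ I would argue as follows: assuming $\mathcal{M} \models_X \dep(u,v)$, take any $s, s' \in X$ with $s(u) = s'(u)$ and simply choose $s'' = s$. Then $s''(u) = s(u)$ and $s''(v) = s(v)$ hold trivially, and $s''(v) = s(v) = s'(v)$ holds by the dependence atom; hence the independence atom is witnessed. For $(\Leftarrow)$, assuming $\mathcal{M} \models_X v \perpc{u} v$, take any $s, s' \in X$ with $s(u) = s'(u)$ and let $s''$ be the assignment provided by the independence atom; then $s(v) = s''(v) = s'(v)$, which is precisely what $\dep(u,v)$ requires.

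There is essentially no obstacle: the only point that needs care is bookkeeping the roles of the three coordinate tuples in $v \perpc{u} v$, namely that both the "left" and the "right" tuple of the independence atom are taken to be $v$ and the "parameter" tuple $w$ is taken to be $u$. Consequently the proof is a one-line reduction in each direction, matching the "Immediate" proof given for Proposition~\ref{functional_dep_as_embed}.
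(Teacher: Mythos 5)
Your proof is correct and is exactly the argument the paper leaves implicit (it gives no written proof, treating the statement as immediate, just as with Proposition~\ref{functional_dep_as_embed}): in the direction $(\Rightarrow)$ the witness $s''=s$ works because $\dep(u,v)$ gives $s(v)=s'(v)$, and in $(\Leftarrow)$ the witness $s''$ forces $s(v)=s''(v)=s'(v)$. Your bookkeeping of the roles of the tuples in $v \perpc{u} v$ is also accurate, so there is nothing to add.
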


	Given a non-empty set $M$, we denote by $(M) = M$ the structure in the empty signature with domain $M$. Clearly, given $R = \left\{ x_{0}, ..., x_{n-1} \right\} \subseteq \mathrm{Var}$, each database $r$ over $R$ can be seen as a team $X_r$ on $\bigcup_{i < n} \mathrm{dom}(x_{i})$ with domain $R$, and each team $X$ on a substructure of $\bigcup_{i < n} \mathrm{dom}(x_{i})$ with domain $R$ can be seen as a database $r_X$ over $R$. 
	
	\begin{remark}\label{red_to_database} Let $M$ be a non-empty set, $X$ a team on $M$, $x, y$ and $z \subseteq \mathrm{dom}(X) \subseteq \mathrm{Var}$. Then 
		\[ M \models_X x \perpc{z} y \;\; \Leftrightarrow \;\; r_X \text{ satisfies } z \twoheadrightarrow x \, | \, y. \]
		
\end{remark}

	As Remark~\ref{red_to_database} shows and any researcher in dependence logic knows, the form of independence at play in team semantics is {\em exactly} database independence. Thus, as the latter case of independence is reducible to diving in $\mathrm{ABA}$, so is the former.
	
	As made clear by our exposition of the subject, independence logic is an extension of first-order logic which is based on a database-oriented notion of team, and a particular form of independence, i.e. database independence. In light of the analogies between this form of independence and stochastic independence noted above, it seems plausible to formulate a version of independence logic which admits probabilities and it is able to deal with stochastic independence. This is done by the authors in \cite{quantum_teams} and \cite{measure_teams}, where applications of these ideas to quantum logic are also developed.

\end{document}